\theoremstyle{plain}
\newtheorem{theorem}{Theorem}[section]
\newtheorem{proposition}[theorem]{Proposition}
\newtheorem{lemma}[theorem]{Lemma}
\newtheorem{corollary}[theorem]{Corollary}
\newtheorem{remark}[theorem]{Remark}
\newtheorem{definition}[theorem]{Definition}
\newtheorem{example}[theorem]{Example}
\newtheorem{main theorem}[theorem]{Main Theorem}
\newtheorem{question}[theorem]{Question}
\newtheorem{convention}[theorem]{Convention}
\newenvironment{rcase}{\left.\begin{aligned}}
{\end{aligned}\right\rbrace}
\newcommand{\ZZ}{\mathbb{Z}}
\newcommand{\QQ}{\mathbb{Q}}
\newcommand{\RR}{\mathbb{R}}
\newcommand{\HH}{\mathbb{H}}
\newcommand{\QQQ}{\hat{\mathbb{Q}}}
\newcommand{\RRR}{\hat{\mathbb{R}}}
\newcommand{\Conway}{\mbox{\boldmath$S$}^{2}}
\newcommand{\Conways}
{(\mbox{\boldmath$S$}^{2},\mbox{\boldmath$P$})}
\newcommand{\PP}{\mbox{\boldmath$P$}}
\newcommand{\PConway}{\mbox{\boldmath$S$}}
\newcommand{\rtangle}[1]{(B^3,t({#1}))}
\newcommand{\DD}{\mathcal{D}}
\newcommand{\RGPP}[1]{\hat\Gamma_{#1}}
\newcommand{\RGP}[1]{\Gamma_{#1}}
\newcommand{\llangle}{\langle\langle}
\newcommand{\rrangle}{\rangle\rangle}
\newcommand{\lp}{(\hskip -0.07cm (}
\newcommand{\rp}{)\hskip -0.07cm )}
\begin{document}

\title{Homotopically equivalent simple loops
on 2-bridge spheres in 2-bridge link complements (I)}
\author{Donghi Lee}
\address{Department of Mathematics\\
Pusan National University \\
San-30 Jangjeon-Dong, Geumjung-Gu, Pusan, 609-735, Republic of Korea}
\email{donghi@pusan.ac.kr}

\author{Makoto Sakuma}
\address{Department of Mathematics\\
Graduate School of Science\\
Hiroshima University\\
Higashi-Hiroshima, 739-8526, Japan}
\email{sakuma@math.sci.hiroshima-u.ac.jp}

\subjclass[2010]{Primary 20F06, 57M25 \\
\indent {The first author was supported by Basic Science Research Program
through the National Research Foundation of Korea(NRF) funded
by the Ministry of Education, Science and Technology(2009-0065798).
The second author was supported
by JSPS Grants-in-Aid 22340013 and 21654011.}}

\begin{abstract}
In this paper and its two
sequels, we give a necessary and sufficient condition
for two essential simple loops
on a 2-bridge sphere in a 2-bridge link complement
to be homotopic in the link complement.
This paper treats the case when the 2-bridge link is a $(2,p)$-torus link,
where more cases of homotopy arise,
and its sequels will treat the remaining cases.
\end{abstract}
\maketitle

\section{Introduction}
Let $K$ be a $2$-bridge link in $S^3$ and let $S$
be a $4$-punctured sphere
in $S^3-K$ obtained from a $2$-bridge sphere of $K$.
In \cite{lee_sakuma}, we gave a complete characterization of
those essential simple loops in $S$
which are null-homotopic in $S^3-K$,
and by using the result,
we described all upper-meridian-pair-preserving epimorphisms between $2$-bridge link groups.
The purpose of this paper and its sequels is to
give a necessary and sufficient condition
for two essential simple loops on $S$
to be homotopic in $S^3-K$.

It has been proved
by Weinbaum~\cite{Weinbaum} and
Appel and Schupp~\cite{Appel-Schupp}
that the word and conjugacy problems
for prime alternating link groups are solvable,
by using small cancellation theory
(see also \cite{Johnsgard} and references in it).
Moreover, it was also shown
by Sela~\cite{Sela} and Pr\'eaux~\cite{Preaux} that
the word and conjugacy problems
for any link group are solvable.
A characteristic feature of
this series of papers including \cite{lee_sakuma}
is that we give a complete answer to
special (but also natural)
word and conjugacy problems
for the groups of 2-bridge links,
which form a special (but also important) family
of prime alternating links.
The key tool used in the proofs is small cancellation theory,
applied to two-generator and one-relator presentations
of $2$-bridge link groups.

In this paper, we treat the case
when the $2$-bridge link is a $(2,p)$-torus link
(Main Theorem~\ref{main_theorem}),
where more cases of homotopy arise.
The remaining cases will be treated in the sequels of this paper
\cite{lee_sakuma_3} and \cite{lee_sakuma_4},
and these results will be used in \cite{lee_sakuma_5}
to show the existence of a variation of McShane's identity for $2$-bridge links.
For an overview of this series of works, we refer the reader to
the research announcement~\cite{lee_sakuma_6}.

This paper is organized as follows.
In Section~\ref{statements},
we recall basic facts concerning $2$-bridge links,
and describe the main result of this paper.
In Section~\ref{preliminaries},
we recall the upper presentation of a $2$-bridge link group,
and recall key facts established in \cite{lee_sakuma}
concerning the upper presentation.
These facts are to be used throughout this series of papers.
In Section~\ref{annular_diagrams},
we establish a very strong
structure theorem (Theorems~\ref{structure}
and \ref{cor:structure})
for the annular diagram
which arises in the study of the conjugacy problem
by using small cancellation theory.
Finally, Section~\ref{sec:only_if_part} is
devoted to the proof of the main result.

\section{Main result}
\label{statements}

Consider the discrete group, $H$, of isometries
of the Euclidean plane $\RR^2$
generated by the $\pi$-rotations around
the points in the lattice $\ZZ^2$.
Set $\Conways:=(\RR^2,\ZZ^2)/H$
and call it the {\it Conway sphere}.
Then $\Conway$ is homeomorphic to the 2-sphere,
and $\PP$ consists of four points in $\Conway$.
We also call $\Conway$ the Conway sphere.
Let $\PConway:=\Conway-\PP$ be the complementary
4-times punctured sphere.
For each $r \in \QQQ:=\QQ\cup\{\infty\}$,
let $\alpha_r$ be the unoriented simple loop in $\PConway$
obtained as the projection of any straight
line in $\RR^2-\ZZ^2$ of slope $r$.
Then $\alpha_r$ is {\it essential} in $\PConway$,
i.e., it does not bound a disk
nor a once-punctured disk in $\PConway$.
Conversely, any essential simple loop in $\PConway$
is isotopic to $\alpha_r$ for
a unique $r\in\QQQ$.
Then $r$ is called the {\it slope} of the simple loop.
Similarly, any simple arc $\delta$
in $\Conway$ joining two
different points in $\PP$ such that
$\delta\cap \PP=\partial\delta$
is isotopic to the image of a line in $\RR^2$
of some slope $r\in\QQQ$
which intersects $\ZZ^2$.
We call $r$ the {\it slope} of $\delta$.
Thus, for every slope $r\in\QQQ$, there exist two arcs and one loop of slope $r$
in $\Conways$ (all unoriented).

A {\it trivial tangle} is a pair $(B^3,t)$,
where $B^3$ is a 3-ball and $t$ is a union of two
arcs properly embedded in $B^3$
which is parallel to a union of two
mutually disjoint arcs in $\partial B^3$.
By a {\it rational tangle},
we mean a trivial tangle $(B^3,t)$
which is endowed with a homeomorphism from
$(\partial B^3,\partial t)$
to $\Conways$.
Through the homeomorphism we identify
the boundary of a rational tangle with the Conway sphere.
Thus the slope of an essential simple loop in
$\partial B^3-t$ is defined.
We define
the {\it slope} of a rational tangle
to be the slope of
an essential loop on $\partial B^3 -t$ which bounds a disk in $B^3$
separating the components of $t$.
(Such a loop is unique up to isotopy
on $\partial B^3 -t$ and is called a {\it meridian}
of the rational tangle.)
We denote a rational tangle of slope $r$ by
$\rtangle{r}$.
By van Kampen's theorem, the fundamental group
$\pi_1(B^3-t(r))$ is identified
with the quotient
$\pi_1(\PConway)/\llangle\alpha_r \rrangle$,
where $\llangle\alpha_r \rrangle$ denotes the normal
closure.

For each $r\in \QQQ$,
the {\it 2-bridge link $K(r)$ of slope $r$}
is defined to be the sum of the rational tangles of slopes
$\infty$ and $r$, namely,
$(S^3,K(r))$ is
obtained from $\rtangle{\infty}$ and
$\rtangle{r}$
by identifying their boundaries through the
identity map on the Conway sphere
$\Conways$. (Recall that the boundaries of
rational tangles are identified with the Conway sphere.)
By van Kampen's theorem again,
the {\it link group} $G(K(r)):=\pi_1(S^3-K(r))$ is identified with
$\pi_1(\PConway)/ \llangle\alpha_{\infty},\alpha_r\rrangle$.
Note that $K(r)$ has one or two components according as
the denominator of $r$ is odd or even.
We call $\rtangle{\infty}$
and $\rtangle{r}$, respectively,
the {\it upper tangle} and {\it lower tangle}
of the 2-bridge link.
Also note that $(S^3, K(r))$ is homeomorphic to $(S^3, K(r+1))$,
so that $r$ matters only modulo $1$.

Let $\DD$ be the
{\it Farey tessellation},
that is,
the tessellation of the upper half
plane $\HH^2$ by ideal triangles which are obtained
from the ideal triangle with the ideal vertices $0, 1,
\infty \in \QQQ$ by repeated reflection in the edges.
Then $\QQQ$ is identified with the set of the ideal vertices of $\DD$.
For each $r\in \QQQ$,
let $\RGP{r}$ be the group of automorphisms of
$\DD$ generated by reflections in the edges of $\DD$
with an endpoint $r$.
It should be noted that $\RGP{r}$
is isomorphic to the infinite dihedral group
and the region bounded by two adjacent edges of $\DD$
with an endpoint $r$ is a fundamental domain
for the action of $\RGP{r}$ on $\HH^2$,
by virtue of Poincar\'{e}'s
fundamental polyhedron theorem
(see, for example, \cite{Ratcliffe}).
Let $\RGPP{r}$ be the group generated by $\RGP{r}$ and $\RGP{\infty}$.
When $r\in \QQ - \ZZ$,
$\RGPP{r}$ is equal to the free product $\RGP{r}*\RGP{\infty}$,
having a fundamental domain shown in Figure~\ref{fig.fd}.
Otherwise, $\RGPP{r}$ is the group generated by the reflections
in the edges of $\DD$
or $\RGP{\infty}$ according as $r\in\ZZ$ or $r=\infty$.

\begin{figure}[h]
\includegraphics{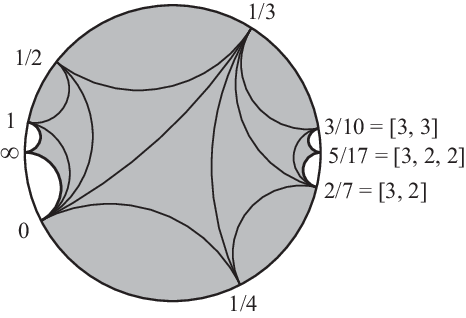}
\caption{
A fundamental domain of $\hat\Gamma_r$ in the
Farey tessellation (the shaded domain) for $r=5/17=${\scriptsize $\cfrac{1}{3+\cfrac{1}{2+\cfrac{1}{2}}}$}\,$=:[3,2,2]$}
\label{fig.fd}
\end{figure}

The following key observation was made
in \cite{Ohtsuki-Riley-Sakuma}.

\begin{proposition} {\rm \cite[Proposition~4.6]{Ohtsuki-Riley-Sakuma}}
\label{ORS-observation}
Let $r \in \QQQ$.
If two elements $s$ and $s'$ of $\QQQ$ belong to the same $\RGPP{r}$-orbit,
then the unoriented loops $\alpha_s$ and $\alpha_{s'}$ are homotopic in $S^3-K(r)$.
\end{proposition}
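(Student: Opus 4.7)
The plan is to reduce to the case of a single reflection generator of $\RGPP{r}$, and then prove that case by exhibiting an explicit free homotopy inside one of the two rational tangles. Since $\RGPP{r}$ is generated by $\RGP{\infty}$ together with $\RGP{r}$, and each of these is generated by reflections in the edges of $\DD$ at $\infty$ (resp.\ at $r$), by transitivity of free homotopy it suffices to prove $\alpha_s\simeq\alpha_{\gamma s}$ in $S^3-K(r)$ whenever $\gamma$ is such a single reflection. By the symmetric roles of $\rtangle{\infty}$ and $\rtangle{r}$ in the decomposition $S^3=\rtangle{\infty}\cup\rtangle{r}$, I focus on $\gamma\in\RGP{\infty}$, say $\gamma$ is the reflection in the edge $[\infty,n]$ for some $n\in\ZZ$. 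I will in fact show the stronger statement that $\alpha_s\simeq\alpha_{\gamma s}$ inside the upper tangle exterior $B^3-t(\infty)\subset S^3-K(r)$.

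On $\QQQ$ the reflection $\gamma$ acts as $s\mapsto 2n-s$, which factors as the composition of the translation $s\mapsto s+2n$ and the sign change $s\mapsto -s$. For the translation part, the Dehn twist $T_{\alpha_{\infty}}$ of $\PConway$ along the meridian $\alpha_{\infty}$ acts on slopes as $s\mapsto s+2$; this can be verified by lifting to the torus double cover of $\PConway$ and computing on $H_1$. Since $\alpha_{\infty}$ bounds a properly embedded disk in $B^3-t(\infty)$ (the meridian disk of the upper tangle), the twist $T_{\alpha_{\infty}}$ extends to an ambient isotopy of $B^3-t(\infty)$ supported in a regular neighborhood of that disk. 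Iterating $n$ times yields an isotopy between $\alpha_s$ and $\alpha_{s+2n}$ inside $B^3-t(\infty)$.

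For the sign change, $s\mapsto -s$ is realized on $\PConway=\RR^2/H$ by the reflection $R\colon(x,y)\mapsto(x,-y)$, which, in the natural geometric model of $\rtangle{\infty}$ as two symmetric vertical arcs, extends to an orientation-reversing involution $\widetilde R$ of $(B^3,t(\infty))$ preserving each component of $t(\infty)$ pointwise. The induced automorphism of $\pi_1(B^3-t(\infty))\cong F_2=\langle a,b\rangle$ inverts each meridian generator, $a\mapsto a^{-1}$, $b\mapsto b^{-1}$, and hence sends any word $w$ to the word obtained by inverting each letter, which equals the reverse of $w^{-1}$. Using the cyclically palindromic structure of the words representing the simple loops $\alpha_s$ in the upper presentation (a standard feature of 2-bridge words, to be recalled in Section~\ref{preliminaries}), the word representing $\alpha_s$ is conjugate in $F_2$ to its own reverse; it follows that $\widetilde R_*(w)$ is conjugate to $w^{-1}$, and since the loops are unoriented this proves $\alpha_s\simeq\widetilde R(\alpha_s)=\alpha_{-s}$ in $B^3-t(\infty)$.

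Composing the two pieces yields $\alpha_s\simeq\alpha_{\gamma s}$ in $B^3-t(\infty)\subset S^3-K(r)$, and the analogous argument performed inside the lower tangle handles reflections at $r$. The principal obstacle is the cyclically palindromic structure invoked in the sign-change step; I expect this to be the chief technical point, though it is a well-known property of the simple-loop words arising from the Farey tessellation and is directly available from the upper presentation machinery.
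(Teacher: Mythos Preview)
The paper does not supply its own proof of this proposition; it is quoted from \cite[Proposition~4.6]{Ohtsuki-Riley-Sakuma} and simply invoked as a known result, so there is no in-paper argument to compare against.

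Your argument is essentially correct. The reduction to single reflection generators and the Dehn-twist step (giving $\alpha_s\simeq\alpha_{s+2}$ in $B^3-t(\infty)$) are clean and standard. For the sign-change step your reasoning works but is slightly roundabout: rather than pass through the reversal of $w^{-1}$ and invoke a palindromic property (which would require Corollary~\ref{induction2}), you can observe directly that $\widetilde R_*(u_s)$ is a cyclically alternating word with the \emph{same} $S$-sequence as $u_s$ (applying $a\mapsto a^{-1}$, $b\mapsto b^{-1}$ only flips the signs of exponents), and then Lemma~\ref{meaning_of_S-sequence+initial}(2) immediately yields that $\widetilde R_*(u_s)$ is a cyclic permutation of $u_s^{\pm1}$. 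This is precisely the mechanism the paper uses later in the proof of Lemma~\ref{action_on_G} for the involution $h$ of $S^3-K(1/p)$; your observation that the same reasoning already goes through inside $B^3-t(\infty)$, and hence for every $K(r)$, is a pleasant sharpening. The appeal to symmetry for the lower-tangle case is legitimate: any rational tangle is pairwise homeomorphic to $\rtangle{\infty}$, and such a homeomorphism carries $\Gamma_r$ to $\Gamma_\infty$, so the argument transports verbatim.
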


\begin{proof}
We give a proof to this proposition from a view point brought to the authors by the referee.
Let $\gamma_0\in\Gamma_{\infty}$ be the reflection of $\DD$
in the Farey edge $\langle \infty,0\rangle$.
Note that $\gamma_0$ is induced by a self-homeomorphism, $\tilde\gamma_0$, of $\PConway$
which is a reflection in the simple loop $\alpha_0$ of slope $0$.
Recall that $\PConway$ is regarded as a subspace of $S^3-K(r)$,
and denote by $j:\PConway\to S^3-K(r)$ the inclusion map.
Then $j$ can be homotoped to $j\circ\tilde\gamma_0$
by swapping the upper and lower hemispheres of $\PConway$ bounded $\alpha_0$
through the tangle $(B^3,t(\infty))$, as in Figure~\ref{fig-homotopy}(a).
Hence $\alpha_{\gamma_0(s)}=j\circ\tilde\gamma_0(\alpha_s)$
is homotopic to $j(\alpha_s)=\alpha_s$ in $S^3-K(r)$.

Let $\gamma_1\in\Gamma_{\infty}$ be the reflection of $\DD$
in the Farey edge $\langle \infty,1\rangle$,
and set $\varphi=\gamma_1\gamma_0\in\Gamma_{\infty}$.
Then $\varphi$ is the parabolic transformation of $\DD$,
centered on $\infty$, by two units in the counter-clockwise direction,
and it is induced by a self-homeomorphism $\tilde\varphi$ of $\PConway$
which is a Dehn twist along the simple loop $\alpha_{\infty}$.
Since $\alpha_{\infty}$ bounds a disk in $B^3-t(\infty)$,
$j$ can be homotoped to $j\circ\tilde\varphi$ by a homotopy in $B^3-t(\infty)$.
Hence $\varphi$ also preserves the homotopy classes,
in $S^3-K(r)$, of simple loops in $\PConway$.

Since $\Gamma_{\infty}$ is generated by $\gamma_0$ and $\varphi$,
the above observations imply that $\Gamma_{\infty}$ preserves the homotopy classes,
in $S^3-K(r)$, of simple loops in $\PConway$.
We can also see that $\Gamma_{r}$ has the same property
by using homotopies in $B^3-t(r)$.
Hence we obtain the desired result.
\end{proof}

Thus the following question naturally arises.

\begin{question} \label{question}
Is the converse to the above proposition valid?
If not, when are two essential simple loops on
a 2-bridge sphere in a 2-bridge link complement
homotopic in the link complement?
\end{question}

If $r=\infty$, then $G(K(\infty))$ is a rank $2$ free group,
and the result of Komori and Series \cite[Theorem~1.2]{Komori-Series}
is equivalent to the affirmative answer to the first question in the above.
On the other hand, if $r=0$ (or any integer), then $G(K(0))$ is
the infinite cyclic group,
and $\RGPP{0}$ is equal to the group
generated by the reflections in the edges of any of $\DD$.
In particular, any Farey triangle
is a fundamental domain for the action of $\RGPP{0}$ on $\HH^2$.
Hence, any $s\in\QQQ$ belongs to the $\RGPP{0}$-orbit of one
and only one of $\{0,1,\infty\}$.
But since $\alpha_1$ is not null-homotopic while $\alpha_0$ and $\alpha_{\infty}$
are null-homotopic in $S^3-K(0)$, every simple loop which is not null-homotopic
has slope belonging to the $\RGPP{0}$-orbit of $1$.
This yields the affirmative answer to the first question in the above for $r=0$.
In \cite{lee_sakuma},
we gave the following complete characterization of those
essential simple loops on $2$-bridge spheres of $2$-bridge links
which are null-homotopic in the link complements.

\begin{theorem}{\rm \cite[Main Theorem~2.3]{lee_sakuma}}
\label{previous_main_theorem}
Let $r \in \QQQ$.
The loop $\alpha_s$ is null-homotopic in $S^3 - K(r)$
if and only if $s$ belongs to the $\RGPP{r}$-orbit of $\infty$ or $r$.
\end{theorem}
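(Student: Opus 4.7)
The \emph{if} direction is straightforward given the preceding proposition: if $s$ lies in the $\RGPP{r}$-orbit of $\infty$, then $\alpha_s$ is homotopic in $S^3 - K(r)$ to $\alpha_{\infty}$, and the latter bounds a meridian disk in the upper rational tangle $\rtangle{\infty}$, hence is null-homotopic in the link complement. The case $s \in \RGPP{r}\cdot r$ is identical, using the lower tangle $\rtangle{r}$.

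For the \emph{only if} direction, my plan is to pass to the upper presentation. Filling in the upper tangle kills $\alpha_{\infty}$, so $\pi_1(\PConway)/\llangle \alpha_\infty \rrangle$ is the rank-$2$ free group $F = \langle a, b\rangle$ on the upper meridians, and the link group is realised as the one-relator quotient $G(K(r)) = \langle a, b \mid u_r \rangle$, where $u_r$ is a specific cyclic word built from the continued-fraction expansion of $r$. Each $\alpha_s$ is encoded (up to inversion and conjugacy) by a cyclic word $u_s \in F$ with an analogous continued-fraction structure, and the null-homotopy hypothesis becomes $u_s \in \llangle u_r \rrangle$.

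The engine of the proof is small cancellation theory. A direct inspection of $u_r$ via its continued-fraction structure should show that any common subword (\emph{piece}) of two distinct cyclic conjugates of $u_r^{\pm 1}$ is short compared with $u_r$, so the presentation satisfies a strong small cancellation condition. Assuming $u_s$ is nontrivial in $F$ but trivial in $G(K(r))$, van Kampen's lemma supplies a reduced disk diagram $M$ over the presentation with boundary label a cyclic conjugate of $u_s$, and a Greendlinger-type structure theorem forces $\partial M$ to share a long arc with the boundary cycle of some $2$-cell $R$ of $M$. Reading off this arc identifies a long subword of $u_s$ with a long subword of some cyclic conjugate of $u_r^{\pm 1}$, and replacing it produces a strictly shorter word $u_{s'}$ representing a loop $\alpha_{s'}$ that is still null-homotopic in $S^3 - K(r)$.

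The crux --- and the main obstacle --- is to verify that the replacement just described corresponds precisely to the action of a generator of $\RGPP{r}$, i.e., to a reflection in an edge of the Farey tessellation incident to $\infty$ or $r$, so that $s' \in \RGPP{r}\cdot s$. Once this correspondence is established, an induction on word length (equivalently, on the complexity of the continued fraction of $s$) must terminate at a trivial word in $F$, forcing $s \in \RGPP{r}\cdot\infty \,\cup\, \RGPP{r}\cdot r$. Translating the purely combinatorial conclusion about matched arcs back into the arithmetic language of the Farey tessellation and the $\RGPP{r}$-action is the technical heart of the argument, and is where the compatibility between the continued-fraction structure of $u_r$ and the small cancellation combinatorics of $M$ must be exploited in full.
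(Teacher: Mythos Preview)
This theorem is not proved in the present paper; it is quoted from the authors' earlier work \cite{lee_sakuma}. What the paper does do is reformulate it: by Lemma~\ref{Lemma:FundametalDomain}, every $s\in\QQQ$ has a unique $\RGPP{r}$-representative $s_0\in I_1(r)\cup I_2(r)\cup\{\infty,r\}$, so the only-if direction becomes Theorem~\ref{if_part_theorem}, which asserts directly that $\alpha_{s_0}$ is \emph{not} null-homotopic for $s_0\in I_1(r)\cup I_2(r)$. The proof of that (in \cite{lee_sakuma}) runs via the $C(4)$--$T(4)$ small cancellation structure and a curvature/structure argument for disk diagrams analogous to Theorem~\ref{structure} here: one shows that a putative reduced van~Kampen disk with boundary label $u_{s_0}$ simply cannot exist, because the boundary would be forced to contain a subword whose $S$-sequence includes $(S_1,S_2)$ or $(S_2,S_1)$, contradicting Lemma~\ref{connection}.

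Your outline takes a genuinely different route --- an inductive Dehn-shortening argument --- and this is where the gap lies. The structural step you rely on is not available in the form you state it. Under $C(4)$--$T(4)$ one does not get a Greendlinger region with a long consecutive boundary arc; what the curvature formula (Lemma~\ref{inequality}) yields is control over the pattern of degree-$2$ boundary vertices, not a single face absorbing more than half of $\partial M$. More seriously, even granting a shortening move, you assume without argument that the shortened word is again of the form $u_{s'}$ for some slope $s'$; a priori it is just some alternating word, and there is no mechanism in your sketch that forces it to arise from a simple loop on $\PConway$. The identification of such a move with a reflection in a Farey edge through $\infty$ or $r$ --- which you flag as the crux --- therefore has nothing to act on. The fundamental-domain reduction in the paper sidesteps all of this: one first normalises $s$ using the $\RGPP{r}$-action (purely arithmetic, no diagrams), and only then invokes small cancellation to rule out a single diagram, with no induction and no need to track slopes through diagrammatic moves.
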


The purpose of this paper and its sequels is to solve
the above question.
In this paper,
we give an answer to the question
for the $(2,p)$-torus link $K(1/p)$ with $p \ge 2$
(Main Theorem~\ref{main_theorem}).

In order to state the main result,
we introduce some notation.
For a rational number $r=1/p$,
let $\tau$ be the automorphism of the Farey tessellation
defined as follows:
\begin{enumerate} [\indent \rm (i)]
\item If $p=2p_0$ for some $p_0\in \ZZ_+$,
then $\tau$ is the reflection in the Farey edge
$\langle 0, 1/p_0\rangle$.

\item If $p=2p_0+1$ for some $p_0\in \ZZ_+$,
then $\tau$ is the reflection in the geodesic
with an endpoint $0$ which bisects the Farey edge
$\langle 1/p_0, 1/(p_0+1)\rangle$.
Thus $\tau$ is the reflection in the geodesic
with endpoints $0$ and $2/(2p_0+1)$.
\end{enumerate}
In both cases, $\tau$ is the reflection in the geodesic
with endpoints $0$ and $2/p$,
and interchanges $\infty$ with $1/p$ (see Figure~\ref{fig.involution}).
Moreover, we can see that the action of $\tau$ on the vertex set
$\QQQ$ of the Farey tessellation is given by
$\tau(c/d)=c/(cp-d)$.
In particular, if $\tau(q_1/p_1)=q_2/p_2$,
where $(p_i, q_i)$ is a pair of
relatively prime positive integers,
then $q_1=q_2$ and $q_1/(p_1+p_2)=1/p$.

\begin{figure}[h]
\includegraphics{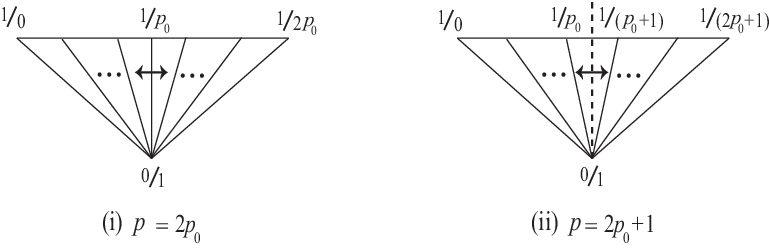}
\caption{\label{fig.involution}
The involution $\tau$}
\end{figure}

Set $\tilde{\Gamma}_{1/p}=\langle \RGPP{1/p}, \tau \rangle$,
the group generated by $\RGPP{1/p}$ and $\tau$.
Then $\tilde{\Gamma}_{1/p}$ is a $\ZZ_2$-extension of $\RGPP{1/p}$.
Now our main theorem is stated as follows.

\begin{main theorem} \label{main_theorem0}
Let $p \ge 2$ be an integer.
Then, for two distinct $s, s' \in \QQQ$,
the unoriented loops $\alpha_s$ and $\alpha_{s'}$ are homotopic in $S^3-K(1/p)$
if and only if
$s$ and $s'$ lie in the same $\tilde{\Gamma}_{1/p}$-orbit.
\end{main theorem}

The if part of Main Theorem~\ref{main_theorem0} can be easily proved by
the following nice observation pointed out by the referee.
By virtue of by Proposition~\ref{ORS-observation},
we have only to show that
$\tau$ preserves the homotopy classes, in $S^3-K(1/p)$,
of simple loops in $\PConway$.
Since the reflection $\gamma_0\in \RGPP{1/p}$
in the Farey edge $\langle \infty, 0 \rangle$
has this property by Proposition~\ref{ORS-observation},
we may prove that $\psi:=\tau\circ\gamma_0$ has this property.
Since $\psi$ is the parabolic transformation of $\DD$,
centered on $0$, by $p$ units in the counter-clockwise direction,
it is induced by a self-homeomorphism $\tilde\psi$ of $\PConway$
which is the $p$ half twists along the simple loop $\alpha_{0}$.
We may assume that the the half twists are made along a parallel copy
of $\alpha_{0}$ lying in the interior of the lower hemisphere
so that the support of $\tilde\psi$ is contained in the lower hemisphere.
Then the inclusion map $j:\PConway\to S^3-K(1/p)$ is homotoped to
$j\circ \tilde\psi$
by pushing the lower-hemisphere sweeping out the inner tangle $(B^3,t(1/p))$,
then through the upper hemisphere back to itself
sweeping out the outer tangle $(B^3,t(\infty))$, as in Figure~\ref{fig-homotopy}(b).
Hence we obtain the the desired result as in the proof of Proposition~\ref{ORS-observation}.

\begin{figure}[h]
\includegraphics{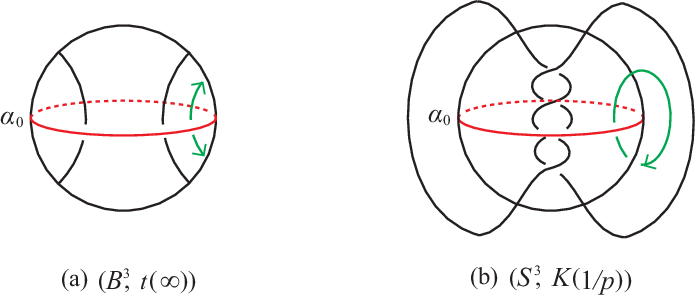}
\caption{\label{fig-homotopy}}
\end{figure}

In the following, we give a reformulation of the main theorem
so that it is more suitable for the proof of the only if part.
Suppose that $r$ is a rational number with $0<r<1$.
Write
\begin{center}
\begin{picture}(230,70)
\put(0,48){$\displaystyle{
r=
\cfrac{1}{m_1+
\cfrac{1}{ \raisebox{-5pt}[0pt][0pt]{$m_2 \, + \, $}
\raisebox{-10pt}[0pt][0pt]{$\, \ddots \ $}
\raisebox{-12pt}[0pt][0pt]{$+ \, \cfrac{1}{m_k}$}
}} \
=:[m_1,m_2, \dots,m_k],}$}
\end{picture}
\end{center}
where $k \ge 1$, $(m_1, \dots, m_k) \in (\mathbb{Z}_+)^k$, and $m_k \ge 2$.
Recall that
the region, $R$, bounded by a pair of
Farey edges with an endpoint $\infty$
and a pair of Farey edges with an endpoint $r$
forms a fundamental domain of the action of $\RGPP{r}$ on $\HH^2$
(see Figure~\ref{fig.fd}).
Let $I_1(r)$ and $I_2(r)$ be the closed intervals in $\RRR$
obtained as the intersection with
$\RRR-\{r, \infty\}$
of the closure of $R$.
To be precise, $I_1(r)=[0,r_1]$ and $I_2(r)=[r_2,1]$,
where
\[
\begin{aligned}
&\begin{rcase}
r_1 &=[m_1, m_2, \dots, m_{k-1}] \\
r_2 &=[m_1, m_2, \dots, m_{k-1}, m_k-1] \,
\end{rcase}
\quad \text{if $k$ is odd,}
\\
&\begin{rcase}
r_1 &=[m_1, m_2, \dots, m_{k-1}, m_k-1] \\
r_2 &=[m_1, m_2, \dots, m_{k-1}] \,
\end{rcase}
\quad \text{if $k$ is even.}
\end{aligned}
\]
If $r=1/p$ ($p \ge 2$), then
$I_1(r)$
is degenerate to the singleton
$\{0\}$.
And if $r=(p-1)/p$ ($p \ge 2$), then
$I_2(r)$ is degenerate to the singleton
$\{1\}$.
Otherwise, $I_1(r)$ and $I_2(r)$ are
non-degenerate intervals,
and the union $I_1(r) \cup I_2(r)$ forms
a fundamental domain of the action of $\RGPP{r}$
on the domain of discontinuity of $\RGPP{r}$,
the complement in $\RRR$ of the closure
of $\RGPP{r}\{\infty, r\}$.
(In the exceptional case $r=1/p$ (resp., $(p-1)/p$),
the rational number
$0$ (resp., $1$)
lies in the limit set and
$I_2(r)$ (resp., $I_1(r)$)
is a fundamental domain of the action of
$\RGPP{r}$ on the domain of discontinuity.)

\begin{lemma}{\rm \cite[Lemma~7.1]{lee_sakuma}}
\label{Lemma:FundametalDomain}
Suppose $0<r<1$.
Then, for any $s\in\QQQ$,
there is a unique rational number
$s_0\in I_1(r) \cup I_2(r) \cup \{\infty, r\}$
such that $s$ is contained in the $\RGPP{r}$-orbit of $s_0$,
and in particular, $\alpha_s$ is homotopic to $\alpha_{s_0}$ in
$S^3-K(r)$.
\end{lemma}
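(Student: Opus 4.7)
The plan is to derive the lemma from Poincar\'e's fundamental polyhedron theorem together with a case analysis based on whether $s$ is a rational parabolic fixed point of $\RGPP{r}$ or lies in the domain of discontinuity. As recalled in the excerpt, the region $R$ bounded by the two Farey edges adjacent to $\infty$ and the two Farey edges adjacent to $r$ is a fundamental domain for the action of $\RGPP{r}$ on $\HH^2$, and its closure in $\HH^2\cup\RRR$ meets $\RRR$ precisely in $I_1(r)\cup I_2(r)\cup\{\infty, r\}$; moreover $I_1(r)\cup I_2(r)$ serves as a fundamental domain for the induced action on the domain of discontinuity of $\RGPP{r}$.

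For existence, given $s\in\QQQ$ I would split into two cases. If $s$ lies in the $\RGPP{r}$-orbit of $\infty$ or $r$, set $s_0=\infty$ or $s_0=r$ accordingly. Otherwise, the key point is that every rational number outside $\RGPP{r}\{\infty, r\}$ must lie in the domain of discontinuity of $\RGPP{r}$: since $\RGPP{r}$ is essentially the free product $\RGP{\infty}*\RGP{r}$, every parabolic element is conjugate into one of the two factors, so all parabolic fixed points lie in $\RGPP{r}\{\infty, r\}$, and a Farey-tessellation argument rules out any other rational limit point. The fundamental-domain property of $I_1(r)\cup I_2(r)$ then produces a translate of $s$ in this union, which we take to be $s_0$.

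For uniqueness, suppose $s_0, s_0'\in I_1(r)\cup I_2(r)\cup\{\infty, r\}$ lie in the same $\RGPP{r}$-orbit. Since the domain of discontinuity is $\RGPP{r}$-invariant, $s_0$ and $s_0'$ are either both in $\{\infty, r\}$ or both in $I_1(r)\cup I_2(r)$. In the former case, equality holds because $\RGP{\infty}$ and $\RGP{r}$ are nonconjugate maximal parabolic subgroups of the free product, forcing the orbits $\RGPP{r}\infty$ and $\RGPP{r}r$ to be distinct. In the latter case, equality follows from the fundamental-domain property. The assertion $\alpha_s\simeq\alpha_{s_0}$ in $S^3-K(r)$ is then immediate from the Ohtsuki--Riley--Sakuma proposition quoted above.

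The main technical obstacle I anticipate is the treatment of the degenerate cases $r=1/p$ and $r=(p-1)/p$: then $I_1(r)$ (resp. $I_2(r)$) collapses to the singleton $\{0\}$ (resp. $\{1\}$), two sides of $R$ meet at the corresponding corner, and that corner becomes a genuine parabolic cusp of $\HH^2/\RGPP{r}$ lying in its own $\RGPP{r}$-orbit. One must then verify that this extra parabolic orbit remains disjoint both from $\RGPP{r}\{\infty, r\}$ and from the $\RGPP{r}$-orbits of interior points of the surviving interval, in order that existence and uniqueness of $s_0$ both continue to hold.
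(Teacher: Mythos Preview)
The paper does not supply its own proof of this lemma; it is imported verbatim from \cite[Lemma~7.1]{lee_sakuma} and stated without argument here. So there is nothing in the present paper to compare against.

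That said, your outline is the natural argument and is correct. One simplification worth noting: for the existence step you can bypass the discontinuity-domain discussion entirely by exploiting that $\RGPP{r}$ preserves the Farey tessellation $\DD$ and that the fundamental domain $R$ is a finite union of Farey triangles. Consequently every Farey triangle is a $\RGPP{r}$-translate of one lying in $\bar R$, so every $s\in\QQQ$ (being an ideal vertex of some Farey triangle) is $\RGPP{r}$-equivalent to an ideal vertex of $\bar R$, i.e.\ to a rational in $I_1(r)\cup I_2(r)\cup\{\infty,r\}$. This argument is uniform and handles the degenerate cases $r=1/p$ and $r=(p-1)/p$ without a separate analysis, since it never needs to decide whether $0$ or $1$ lies in the limit set. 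Your uniqueness argument via non-conjugacy of the parabolic stabilizers in the free product, and your observation that in the degenerate case the collapsed vertex becomes a third genuine cusp with its own orbit, are both accurate.
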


Thus the {\it only if} part of Theorem~\ref{previous_main_theorem}
when $r \notin \ZZ \cup \{\infty\}$
is equivalent to the following theorem.

\begin{theorem}{\rm \cite[Theorem~7.2]{lee_sakuma}}
\label{if_part_theorem}
Suppose $0<r<1$.
Then, for any rational number $s\in I_1(r) \cup I_2(r)$,
$\alpha_s$ is not null-homotopic in $S^3-K(r)$.
\end{theorem}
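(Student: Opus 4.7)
The plan is to prove the contrapositive by combining the upper presentation of the 2-bridge link group with small cancellation theory. Identify $G(K(r))=\pi_1(\PConway)/\llangle\alpha_r\rrangle$, so that an essential simple loop $\alpha_s$ is null-homotopic in $S^3-K(r)$ if and only if the word $u_s\in\pi_1(\PConway)$ representing $\alpha_s$ lies in the normal closure of the relator $u_r$ representing $\alpha_r$.

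First I would record the explicit syllable structure of the cyclically reduced word $u_s$, read off from the continued fraction expansion of $s$ (this being standard for the upper presentation). Two slopes whose expansions agree only on a short initial segment produce words $u_s$ and $u_r$ that share only short common subwords. I would then verify that the symmetrized closure of $\{u_r\}$ satisfies a small cancellation condition strong enough for a Greendlinger-type lemma to apply, namely a bound on the length of every piece (a maximal common subword between distinct cyclic conjugates or inverses of $u_r$) as a fixed fraction of $|u_r|$.

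Next, supposing for contradiction that $\alpha_s$ is null-homotopic for some $s\in I_1(r)\cup I_2(r)$, I would choose a reduced van Kampen diagram $D$ over the presentation $\langle a,b\svert u_r\rangle$ with boundary label a cyclic conjugate of $u_s$. Greendlinger's lemma then produces a $2$-cell of $D$ contributing to $\partial D$ an arc whose label is simultaneously a long subword of $u_s$ and of a cyclic conjugate of $u_r^{\pm 1}$. The final step is to translate this combinatorial coincidence into the assertion that $s$ must lie in the $\RGPP{r}$-orbit of $\infty$ or of $r$, which contradicts $s\in I_1(r)\cup I_2(r)$: these intervals form a fundamental domain for the action of $\RGPP{r}$ on the domain of discontinuity and hence are disjoint from the closure of the orbit of $\{\infty,r\}$.

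The main obstacle will be this last step. Making the passage from \emph{long common subword in the free group} to \emph{slopes related by $\RGPP{r}$} rigorous requires a careful, slope-by-slope analysis of how the continued fraction expansion controls the syllable structure of $u_s$ and which subwords can appear as overlaps with cyclic conjugates of $u_r^{\pm 1}$. Concretely, one has to show that a coincidence of subwords of length exceeding the Greendlinger threshold forces the continued fractions of $s$ and $r$ to match through enough partial quotients that $s$ must be the image of $\infty$ or $r$ under an element of $\RGPP{r}$. This is exactly the kind of combinatorial control that the annular diagram structure theorem of Section~\ref{annular_diagrams} is designed to provide in the conjugacy setting, and a simpler disk-diagram companion drives the argument here.
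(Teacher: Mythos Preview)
The paper does not actually prove this theorem: it is quoted verbatim from the authors' earlier work \cite[Theorem~7.2]{lee_sakuma}, and the present paper only records it (together with Lemma~\ref{connection}) as input for the conjugacy problem. So there is no proof here to compare against line by line.

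That said, your plan matches the architecture of the argument in \cite{lee_sakuma} as one can reconstruct it from the tools recapped in this paper: take a reduced disk diagram over $\langle a,b\mid u_r\rangle$, use the $C(4)$--$T(4)$ structure (Proposition~\ref{small_cancellation_condition}) to locate a boundary face with a long arc on $\partial M$, and derive a contradiction. Two refinements are worth flagging. First, in the $[4,4]$ setting the relevant structural fact is not Greendlinger's lemma in its classical $C'(1/6)$ form; rather, the curvature inequality (Lemma~\ref{inequality}) produces a boundary face whose intersection with $\partial M$ contains three consecutive edges separated by degree-$2$ vertices, exactly the configuration ruled out in Lemma~\ref{degree2_vertices}(1) and in the Claim of \cite[Theorem~6.3]{lee_sakuma} to which it refers. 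Second, the contradiction is not obtained by arguing that ``the continued fractions of $s$ and $r$ match through enough partial quotients so that $s$ lies in the $\RGPP{r}$-orbit of $\infty$ or $r$''. Instead, one shows directly that the three-edge boundary arc forces $CS(u_s)$ to contain $(S_1,S_2)$ or $(S_2,S_1)$ as a subsequence, and this is precisely what Lemma~\ref{connection}(1) (i.e.\ \cite[Lemma~7.3]{lee_sakuma}) excludes for $s\in I_1(r)\cup I_2(r)$. Your final paragraph correctly identifies this translation as the crux, but the $S$-sequence criterion is the efficient formulation; the continued-fraction analysis you anticipate is exactly what goes into proving Lemma~\ref{connection}, not into the diagram step itself.
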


We can now reformulate our main theorem.

\begin{main theorem} \label{main_theorem}
Let $p \ge 2$ be an integer.
Then, for two distinct rational numbers $s, s' \in I_1(1/p) \cup I_2(1/p)
=\{0\} \cup [{1 \over {p-1}}, 1]$,
the unoriented loops $\alpha_s$ and $\alpha_{s'}$ are homotopic in $S^3-K(1/p)$
if and only if
$s'=\tau(s)$, or in other words
$s=q_1/p_1$ and $s'=q_2/p_2$ satisfy
$q_1=q_2$ and $q_1/(p_1+p_2)=1/p$, where $(p_i, q_i)$ is a pair of
relatively prime positive integers.
\end{main theorem}

We prove the only if part by
interpreting the situation in terms of combinatorial group theory.
In other words, we prove that
two words representing the free homotopy classes of
$\alpha_s$ and $\alpha_{s'}$
are conjugate in the $2$-bridge link group $G(K(1/p))$
only if
$s$ and $s'$ satisfy the conditions given in the statement of the theorem.
The key tool used in the proof is small cancellation theory,
which is one of the representative geometric techniques
in combinatorial group theory, applied to reduced annular diagrams over 2-bridge link groups. The proof is contained in Section~\ref{sec:only_if_part}.

\section{Preliminaries} \label{preliminaries}

In this section, we introduce the
upper presentation of a 2-bridge link group,
and recall key facts established in \cite{lee_sakuma}
concerning it.
These facts are to be used throughout this series of papers.

To find a presentation of
the $2$-bridge link group
$G(K(r))$ explicitly,
let $a$ and $b$, respectively, be the elements of
$\pi_1(B^3-t(\infty), x_0)$
represented by the oriented loops $\mu_1$ and $\mu_2$
based on $x_0$ as illustrated in Figure~\ref{fig.generator}.
Then $\{a,b\}$ forms the meridian pair of
$\pi_1(B^3-t(\infty))$, which is identified with
the free group $F(a,b)$.
Note that
$\mu_i$ intersects the disk, $\delta_i$, in $B^3$
bounded by a component of $t(\infty)$ and
the essential arc, $\gamma_i$, on
$\partial(B^3,t(\infty))=\Conways$ of slope $1/0$,
in Figure~\ref{fig.generator}.
Obtain a word $u_r$ in
$\{a, b\}$ by reading the intersection of the
(suitably oriented) loop $\alpha_r$
with $\gamma_1\cup \gamma_2$,
where a positive intersection with $\gamma_1$ (resp., $\gamma_2$) corresponds to $a$ (resp., $b$).
Then the cyclic word $(u_r)$
represents the free homotopy class of the (oriented) loop $\alpha_r$
(see the paragraph preceding Definition~\ref{def:alternating}
for the precise definition of a cyclic word).
It then follows that
\[
\begin{aligned}
G(K(r))&=\pi_1(S^3-K(r))\cong\pi_1(B^3-t(\infty))/\llangle \alpha_r\rrangle \\
&\cong F(a, b)/ \llangle u_r \rrangle
\cong \langle a, b \, | \, u_r \rangle.
\end{aligned}
\]
This one-relator presentation is
called the {\it upper presentation} of $G(K(r))$ (see \cite{Crowell-Fox}).
If $r \neq \infty$,
then $\alpha_r$ intersects $\gamma_1$ and $\gamma_2$ alternately,
and hence $a$ and $b$ appear in $(u_r)$ alternately
(with exponents $\pm 1$).
It is known by \cite[Proposition~1]{Riley}
that there is a nice formula to find $u_r$ as follows
(see \cite[Remark~1]{lee_sakuma}
for a geometric picture).

\begin{figure}[h]
\includegraphics{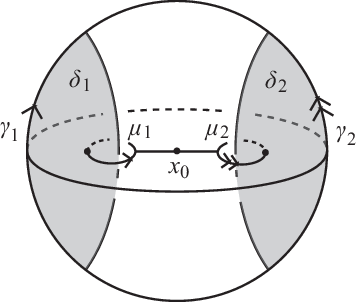}
\caption{
$\pi_1(B^3-t(\infty), x_0)=F(a,b)$,
where $a$ and $b$ are represented by $\mu_1$ and $\mu_2$, respectively.
}
\label{fig.generator}
\end{figure}

\begin{lemma}
\label{presentation}
Let $p$ and $q$ be relatively prime positive integers
such that $p \ge 1$.
For $1 \le i \le p-1$, let
\[\epsilon_i = (-1)^{\lfloor iq/p \rfloor},\]
where $\lfloor x \rfloor$ is the greatest integer not exceeding $x$.
\begin{enumerate}[\indent \rm (1)]
\item If $p$ is odd, then
\[
u_{q/p}=a\hat{u}_{q/p}b^{(-1)^q}\hat{u}_{q/p}^{-1},\]
where
$\hat{u}_{q/p} = b^{\epsilon_1} a^{\epsilon_2} \cdots b^{\epsilon_{p-2}} a^{\epsilon_{p-1}}$.
\item If $p$ is even, then
\[
u_{q/p}=a\hat{u}_{q/p}a^{-1}\hat{u}_{q/p}^{-1},\]
where
$\hat{u}_{q/p} = b^{\epsilon_1} a^{\epsilon_2} \cdots a^{\epsilon_{p-2}} b^{\epsilon_{p-1}}$.
\end{enumerate}
\end{lemma}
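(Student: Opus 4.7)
The approach is to realize $u_{q/p}$ combinatorially by lifting $\alpha_{q/p}$ to a straight line in the universal cover $\RR^2 - \ZZ^2$ of $\PConway$ and reading off its signed intersections with the preimages of $\gamma_1 \cup \gamma_2$. To set up coordinates, identify $\gamma_1$ and $\gamma_2$ with the projections to $\Conway$ of the vertical unit segments joining $(0,0)$ to $(0,1)$ and $(1,0)$ to $(1,1)$, respectively. Since the translation subgroup of $H$ is $2\ZZ^2$, the full preimages in $\RR^2$ of $\gamma_1$ and $\gamma_2$ are the vertical lines with $x \in 2\ZZ$ and $x \in 2\ZZ+1$, respectively. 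The meridian arcs $\mu_1, \mu_2$ representing the generators $a, b$ lift to short transverse arcs whose orientations are transported consistently along a fixed lift but get reversed by each of the generating $\pi$-rotations of $H$.

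Take the lift $L$ of $\alpha_{q/p}$ to be the line of slope $q/p$ from $(0, \epsilon)$ to $(2p, 2q+\epsilon)$ for a small generic $\epsilon > 0$, so that $L$ avoids $\ZZ^2$. A direct check using $\gcd(p,q)=1$ shows that no nontrivial element of $H$ sends one point of this segment to another, so $L$ projects injectively onto the full loop $\alpha_{q/p}$ (whose endpoints are identified via the translation $(2p, 2q) \in 2\ZZ^2 \subset H$). The line $L$ meets $\{x = i\}$ at $(i, iq/p + \epsilon)$ for $i = 0, 1, \dots, 2p - 1$, giving exactly $2p$ signed crossings, matching the length of $u_{q/p}$. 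The parity of $i$ dictates the letter type (even $i$ yields $a^{\pm 1}$, odd $i$ yields $b^{\pm 1}$), so the resulting word is alternating and begins with $a$. The sign at the $i$-th crossing is computed as follows: the lift of $\mu_1$ or $\mu_2$ across $\{x = i\}$ at height $iq/p + \epsilon$ is related to the reference lift by an element of $H$ whose rotational parity is $\lfloor iq/p + \epsilon \rfloor \pmod 2$, because entering each successive horizontal strip $\{k \le y < k+1\}$ requires one more $\pi$-rotation and thereby flips the orientation of the transverse arc. Hence the sign contributed at step $i$ is $(-1)^{\lfloor iq/p \rfloor}$, which agrees with $\epsilon_i$ for $1 \le i \le p-1$ and equals $(-1)^q$ at the central step $i = p$ (since $\lfloor q + \epsilon \rfloor = q$).

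Assembling the letters, the initial crossing at $i = 0$ contributes $a$, the crossings at $i = 1, \dots, p-1$ give $\hat{u}_{q/p}$, and the middle crossing at $i = p$ contributes $b^{(-1)^q}$ when $p$ is odd and $a^{(-1)^q} = a^{-1}$ when $p$ is even (using that $q$ is odd in that case). It remains to verify that the second half of the word, coming from $i = p+1, \dots, 2p-1$, equals $\hat{u}_{q/p}^{-1}$; this follows from the elementary floor identity $\lfloor (p+j)q/p \rfloor + \lfloor (p-j)q/p \rfloor = 2q - 1$ for $1 \le j \le p-1$ (valid because $jq/p \notin \ZZ$ in that range), which yields $\epsilon_{p+j} = -\epsilon_{p-j}$, combined with the fact that $p+j$ and $p-j$ have the same parity. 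The principal obstacle in this plan is the sign bookkeeping step: one must justify that the orientation-propagation factor from the base lift of $\mu_j$ to the lift at height $iq/p + \epsilon$ is $(-1)^{\lfloor iq/p \rfloor}$, which amounts to identifying an explicit $\ZZ/2$-cocycle on $H$. Once this is pinned down, the remainder reduces to a direct enumeration of $2p$ intersections plus the floor-function identity above.
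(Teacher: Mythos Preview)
The paper does not supply its own proof of this lemma; it is quoted from Riley \cite[Proposition~1]{Riley}, with a pointer to \cite[Remark~1]{lee_sakuma} for ``a geometric picture.'' Your proposal is precisely that geometric picture, worked out in detail, and it is essentially correct.

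A few remarks on the one step you yourself flag as the principal obstacle. Your claim that the sign at the $i$-th crossing is $(-1)^{\lfloor iq/p+\epsilon\rfloor}$ can be justified cleanly as follows. The line $x=m$ in $\RR^2$ maps onto the arc $\gamma_1$ (if $m$ is even) or $\gamma_2$ (if $m$ is odd) as a two-fold branched cover, folded at the lattice points; hence a fixed co-orientation of $\gamma_j$ lifts to a co-orientation of the segment $\{(m,y):k<y<k+1\}$ that points to the right when $k$ is even and to the left when $k$ is odd. Since $L$ always crosses the vertical lines from left to right, the signed intersection at height $iq/p+\epsilon$ is exactly $(-1)^{\lfloor iq/p+\epsilon\rfloor}$, which equals $(-1)^{\lfloor iq/p\rfloor}$ for $0\le i\le 2p-1$ (the only integral values $i=0,p$ cause no discrepancy for small $\epsilon>0$). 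Once this is in place, the remainder of your argument, the parity bookkeeping for the letter type, the identity $\lfloor (p+j)q/p\rfloor+\lfloor (p-j)q/p\rfloor=2q-1$ giving $\epsilon_{p+j}=-\epsilon_{p-j}$, and the observation that $q$ is odd when $p$ is even, goes through without change and yields the stated formula.
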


\begin{remark} \label{epsilon}
{\rm
For $r=0/1$ and $r=1/0$, we have $u_{0/1}=ab$ and $u_{1/0}=1$.
}
\end{remark}

\subsection{$S$- and $T$-sequences of slope $r$}

We define the sequence $S(r)$ and the cyclic sequence $CS(r)$ of slope $r$
both of which arise from the single relator $u_r$ of the upper presentation
$G(K(r))=\langle a, b \, | \, u_r \rangle$,
and review several important properties of these sequences from \cite{lee_sakuma}
so that we can adapt small cancellation theory in Section~\ref{annular_diagrams}.

We first fix some definitions and notation.
Let $X$ be a set.
By a {\it word} in $X$, we mean a finite sequence
$x_1^{\epsilon_1}x_2^{\epsilon_2}\cdots x_n^{\epsilon_n}$
where $x_i\in X$ and $\epsilon_i=\pm1$.
Here we call $x_i^{\epsilon_i}$ the {\it $i$-th letter} of the word.
For two words $u, v$ in $X$, by
$u \equiv v$ we denote the {\it visual equality} of $u$ and
$v$, meaning that if $u=x_1^{\epsilon_1} \cdots x_n^{\epsilon_n}$
and $v=y_1^{\delta_1} \cdots y_m^{\delta_m}$ ($x_i, y_j \in X$; $\epsilon_i, \delta_j=\pm 1$),
then $n=m$ and $x_i=y_i$ and $\epsilon_i=\delta_i$ for each $i=1, \dots, n$.
For example, two words $x_1x_2x_2^{-1}x_3$ and $x_1x_3$ ($x_i \in X$) are {\it not} visually equal,
though $x_1x_2x_2^{-1}x_3$ and $x_1x_3$ are equal as elements of the free group with basis $X$.
The length of a word $v$ is denoted by $|v|$.
A word $v$ in
$X$ is said to be {\it reduced} if $v$ does not contain $xx^{-1}$ or $x^{-1}x$ for any $x \in X$.
A word is said to be {\it cyclically reduced}
if all its cyclic permutations are reduced.
A {\it cyclic word} is defined to be the set of all cyclic
permutations of a word.
By $(v)$ we denote the cyclic word associated with a word $v$.
Also by $(u) \equiv (v)$ we mean the {\it visual equality} of two cyclic words
$(u)$ and $(v)$. In fact, $(u) \equiv (v)$ if and only if $v$ is visually a cyclic shift
of $u$.

\begin{definition}
\label{def:alternating}
{\rm (1) Let $v$ be a reduced word in
$\{a,b\}$. Decompose $v$ into
\[
v \equiv v_1 v_2 \cdots v_t,
\]
where, for each $i=1, \dots, t-1$, all letters in $v_i$ have positive (resp., negative) exponents,
and all letters in $v_{i+1}$ have negative (resp., positive) exponents.
Then the sequence of positive integers
$S(v):=(|v_1|, |v_2|, \dots, |v_t|)$ is called the {\it $S$-sequence of $v$}.

(2) Let $(v)$ be a reduced
cyclic word in $\{a, b\}$. Decompose $(v)$ into
\[
(v) \equiv (v_1 v_2 \cdots v_t),
\]
where all letters in $v_i$ have positive (resp., negative) exponents,
and all letters in $v_{i+1}$ have negative (resp., positive) exponents (taking
subindices modulo $t$). Then the {\it cyclic} sequence of positive integers
$CS(v):=\lp |v_1|, |v_2|, \dots, |v_t| \rp$ is called
the {\it cyclic $S$-sequence of $(v)$}.
Here the double parentheses denote that the sequence is considered modulo
cyclic permutations.

(3) A reduced word $v$ in $\{a,b\}$ is said to be {\it alternating}
if $a^{\pm 1}$ and $b^{\pm 1}$ appear in $v$ alternately,
i.e., neither $a^{\pm2}$ nor $b^{\pm2}$ appears in $v$.
A cyclic word $(v)$ is said to be {\it alternating}
if all cyclic permutations of $v$ are alternating.
In the latter case, we also say that $v$ is {\it cyclically alternating}.
}
\end{definition}

\begin{definition}
\label{def4.1(3)}
{\rm
For a rational number $r$ with $0<r\le 1$,
let $G(K(r))=\langle a, b \, | \, u_r \rangle$
be the upper presentation.
Then the symbol $S(r)$ (resp., $CS(r)$) denotes the
$S$-sequence $S(u_r)$ of $u_r$
(resp., cyclic $S$-sequence $CS(u_r)$ of $(u_r)$), which is called
the {\it S-sequence of slope $r$}
(resp., the {\it cyclic S-sequence of slope $r$}).}
\end{definition}

\begin{lemma}{\rm \cite[Proposition~4.1 and Lemma~5.2]{lee_sakuma}}
\label{meaning_of_S-sequence+initial}
{\rm (1)} An alternating word
in $\{a,b\}$ is completely determined by the
initial letter
and the associated $S$-sequence.

{\rm (2)} Suppose that $r$ is a rational number with $0 < r \le 1$.
Let $w$ be an arbitrary cyclic permutation
of the single relator $u_r$ of the upper presentation of $G(K(r))$.
Then the set
\[
\{ \text{the initial letter of $w' \, | \, (w') \equiv (u_r^{\pm 1})$ and $S(w')=S(w)$} \}
\]
equals $\{a, a^{-1}, b, b^{-1} \}$.
\end{lemma}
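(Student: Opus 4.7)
I will prove the two parts separately.

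Part (1) follows by an immediate induction on the run index. Given $v$ with initial letter $y_0$ and $S$-sequence $(s_1,\dots,s_t)$, the first run of length $s_1$ is forced to be the word $y_0 y_0' y_0 y_0' \cdots$ of length $s_1$, where $y_0'$ is the unique letter of opposite base but the same sign as $y_0$; this is because $v$ is alternating. The first letter of the next run must have the opposite sign (a new run) and the opposite base (by alternation), so it too is determined, and the remainder of that run is built in the same way. Iterating, each $v_i$ is uniquely reconstructed, so $v$ is determined by $y_0$ and $(s_1,\dots,s_t)$.

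For Part (2), Part (1) already guarantees that the displayed set lies in $\{a, a^{-1}, b, b^{-1}\}$, so the task is to realize each of the four letters as the initial letter of some cyclic permutation $w'$ of $u_r^{\pm 1}$ satisfying $S(w') = S(w)$. My plan is to produce four such cyclic permutations from the structure of $u_r$ supplied by Lemma~\ref{presentation}.

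The starting point is the arithmetic identity
\[
\epsilon_{p-i}\;=\;(-1)^{q-1}\,\epsilon_i\qquad(1\le i\le p-1),
\]
which is derived from the equality $\lfloor(p-i)q/p\rfloor = q-1-\lfloor iq/p\rfloor$ valid whenever $\gcd(p,q) = 1$. Combined with the explicit form $u_r = x_1\hat u\, x_2\hat u^{-1}$ (with $x_1 = a$ and $x_2 \in \{a^{-1}, b^{\pm 1}\}$), this identity yields two symmetries of the cyclic $S$-sequence: (a) $CS(r)$ is the doubling $\lp c_1,\dots,c_m,c_1,\dots,c_m\rp$ of some sequence $(c_1,\dots,c_m)$ — equivalently, the two halves $x_1\hat u$ and $x_2\hat u^{-1}$ of $u_r$ have identical run-length sequences and the runs do not merge at either of the two boundary positions $p$ and $2p$; and (b) as a cyclic sequence of positive integers, $CS(r)$ is palindromic (its reverse is cyclically equal to itself), which I reduce to the fact that the internal transition positions in the first half are symmetric around $(p+1)/2$. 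Property (a) gives a half-shift of $w$ inside $u_r$ as a second cyclic permutation with the same $S$-sequence, and (b) — together with $CS(u_r^{-1}) = \text{reverse of }CS(u_r)$ — locates two cyclic permutations of $u_r^{-1}$ whose $S$-sequence is also $S(w)$.

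The final step is a direct verification that the four initial letters obtained in this way exhaust $\{a, a^{-1}, b, b^{-1}\}$. I expect the main technical obstacle to be the proof of (b) together with the final parity analysis of initial letters: the latter must treat separately the three cases $(p\text{ odd},q\text{ odd})$, $(p\text{ odd},q\text{ even})$, and $(p\text{ even},q\text{ odd})$, using the explicit values $x_2 = b^{(-1)^q}$ or $a^{-1}$ and $\epsilon_{p-1} = (-1)^{q-1}$. In particular, in the middle case $(p\text{ odd},q\text{ even})$ the two cyclic permutations of $u_r^{-1}$ with $S$-sequence $S(w)$ are not the initial shift and its half-shift, but must be identified via the palindromic shift of (b); this is the subtlest point of the argument.
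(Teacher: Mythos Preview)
The present paper does not supply its own proof of this lemma; it is quoted verbatim from the authors' earlier paper \cite{lee_sakuma} (Proposition~4.1 and Lemma~5.2 there), so there is no in-paper argument to compare against directly.

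That said, your proposal is sound and in fact recovers precisely the structural facts from \cite{lee_sakuma} that are restated (without proof) elsewhere in this paper. Part~(1) is correct as written. For Part~(2), the identity $\epsilon_{p-i}=(-1)^{q-1}\epsilon_i$ is correct and is exactly what underlies both the half-rotation invariance $s_j(r)=s_{q+j}(r)$ (Lemma~\ref{j-term}(2) here) and the palindromic symmetry of $CS(r)$ (Corollary~\ref{induction2} here). Your plan---half-shift inside $u_r$ to get a second cyclic permutation with $S$-sequence $S(w)$, then use the palindromic axis to locate two cyclic permutations of $u_r^{-1}$ with the same $S$-sequence---is the natural route and matches how these symmetries are deployed in \cite{lee_sakuma}. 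One small clarification worth making explicit: to find a cyclic permutation of $u_r^{-1}$ with $S$-sequence equal to $S(w)$ (rather than its reverse), you must start the backward reading not in the same cyclic run as $w$ but in its image under the palindromic reflection; only then does the run-by-run comparison $c_{k-j_0-i}=c_{j_0+i}$ go through. Once that is in place, the three-case parity analysis you outline (tracking how the base letter and sign transform under the $p$-letter shift and under inversion, according to the parities of $p$ and $q$) is straightforward bookkeeping, and the four initial letters do exhaust $\{a,a^{-1},b,b^{-1}\}$.
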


The above lemma implies that
the cyclic word $(u_r^{\pm 1})$ is completely determined
by the cyclic $S$-sequence $CS(r)$, to be precise,
an alternating word $w'$ is a cyclic permutation of $u_r^{\pm 1}$
if and only if
$w'$ satisfies $S(w')=S(w)$ for some $w$
a cyclic permutation of $u_r$.

\begin{lemma}{\rm \cite[Lemma~4.8 and Proposition~4.2]{lee_sakuma}}
\label{j-term}
Suppose that $r=q/p$ is a rational number with $0<r \le 1$,
where $p$ and $q$ are relatively prime positive integers.
Then the following hold.
\begin{enumerate}[\indent \rm (1)]
\item The sequence $S(r)$ has length $2q$, and its
$j$-th term $\nu_j(r)$ is given by the following formula
$(1\le j\le 2q)${\rm :}
\[
\nu_j(r)=\lfloor jp/q \rfloor_*- \lfloor (j-1)p/q \rfloor_*,
\]
where $\lfloor x \rfloor_*$ is the greatest integer
(strictly) smaller than $x$.

\item The sequence $S(r)$ represents the cyclic sequence $CS(r)$.
Moreover the cyclic sequence $CS(r)$ is invariant by
the half-rotation; that is,
if $\nu_j(r)$ denotes the $j$-th term of $S(r)$
$(1\le j\le 2q)$, then
$\nu_j(r)=\nu_{q+j}(r)$ for every integer $j$ $(1\le j\le q)$.
\end{enumerate}
\end{lemma}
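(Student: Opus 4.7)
The plan is to combine Lemma~\ref{presentation} (the explicit form of $u_{q/p}$) with a direct analysis of sign changes.

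First I would establish a uniform sign formula: the $j$-th letter of $u_{q/p}$ (for $1 \le j \le 2p$) has sign $(-1)^{\lfloor (j-1)q/p\rfloor}$. For the leading $a$ and for the letters of $\hat{u}_{q/p}$ this is immediate from the definition $\epsilon_i = (-1)^{\lfloor iq/p\rfloor}$. For the middle letter at position $p+1$, in both parity cases of $p$ its sign is $(-1)^q$, which equals $(-1)^{\lfloor pq/p\rfloor}$. For the mirrored tail $\hat{u}_{q/p}^{-1}$, position $p+1+k$ has sign $-\epsilon_{p-k}$; using the identities $\lfloor(p-k)q/p\rfloor = q - \lfloor kq/p\rfloor - 1$ (valid for $1 \le k \le p-1$ because $\gcd(p,q)=1$ forces $kq/p \notin \ZZ$) and $\lfloor(p+k)q/p\rfloor = q + \lfloor kq/p\rfloor$, this agrees with $(-1)^{\lfloor(p+k)q/p\rfloor}$.

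With the sign formula in hand, the run boundaries are easy to locate. Since $q/p \le 1$, the floors $\lfloor(j-1)q/p\rfloor$ and $\lfloor jq/p\rfloor$ differ by $0$ or $1$, so the signs at positions $j$ and $j+1$ disagree precisely when some integer $m$ lies in $((j-1)q/p,\, jq/p]$. For each $m \in \{1,\dots,2q\}$ the unique such $j$ turns out to be $\lfloor mp/q\rfloor_{*}+1$ (separating the cases $mp/q \in \ZZ$ and $mp/q \notin \ZZ$ using the definition of $\lfloor\cdot\rfloor_{*}$). Consequently the $m$-th maximal run ends at position $\lfloor mp/q\rfloor_{*}+1$, giving
\[
s_m(r)=\lfloor mp/q\rfloor_{*}-\lfloor (m-1)p/q\rfloor_{*},
\]
which is part (1). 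As a sanity check, $\sum_{m=1}^{2q}s_m(r)=\lfloor 2p\rfloor_{*}-\lfloor 0\rfloor_{*}=2p$, the length of $u_{q/p}$.

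For part (2), cyclic reducibility of $u_{q/p}$ follows directly from Lemma~\ref{presentation}: the first letter is $a$ while the last letter is $b^{-\epsilon_1}$ in both parity cases of $p$, so no cyclic cancellation occurs and $S(r)$ indeed represents $CS(r)$. The half-rotation symmetry is then immediate from the formula in~(1): since $(j+q)p/q = jp/q+p$ and $p\in\ZZ$, we have $\lfloor(j+q)p/q\rfloor_{*}=p+\lfloor jp/q\rfloor_{*}$, whence $s_{j+q}(r)=s_j(r)$ for every $j$. The only delicate point in the whole argument is the case analysis for the mirrored half of $u_{q/p}$ in Step~1, where the coprimality of $p$ and $q$ is used to keep the floor functions aligned; everything else is elementary bookkeeping with $\lfloor\cdot\rfloor$ and $\lfloor\cdot\rfloor_{*}$.
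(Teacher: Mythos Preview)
Your argument is correct. Note, however, that the paper does not actually prove Lemma~\ref{j-term}: it is quoted verbatim from \cite[Lemma~4.8 and Proposition~4.2]{lee_sakuma} without argument, so there is no in-paper proof to compare your approach against. Your route---extracting from Lemma~\ref{presentation} a uniform sign formula $(-1)^{\lfloor (j-1)q/p\rfloor}$ for the $j$-th letter of $u_{q/p}$ and then reading off the run endpoints via $\lfloor\cdot\rfloor_*$---is self-contained and elementary.

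One small point worth spelling out: for ``$S(r)$ represents $CS(r)$'' you need more than that $u_{q/p}$ is cyclically reduced; you need the first and last letters to have \emph{opposite} signs, since otherwise the first and last runs of $S(r)$ would merge into a single run of $CS(r)$. This does hold, because for $0<q/p<1$ one has $\epsilon_1=(-1)^{\lfloor q/p\rfloor}=1$, so the last letter $b^{-\epsilon_1}=b^{-1}$ is negative while the first letter $a$ is positive (and the boundary case $r=1$ gives $u_1=ab^{-1}$ directly). With that clarification the proof is complete.
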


\begin{remark}
\label{remark:j-term}
\rm
For $r=0$, we have $S(u_0)=(2)$ by Remark~\ref{epsilon}.
\end{remark}

In the remainder of this paper unless specified otherwise,
we suppose that $r$ is a rational number
with $0<r \le 1$, and write $r$ as a continued fraction:
\[
r=[m_1,m_2, \dots,m_k],
\]
where $k \ge 1$, $(m_1, \dots, m_k) \in (\mathbb{Z}_+)^k$ and
$m_k \ge 2$ unless $k=1$. For brevity, we write $m$ for $m_1$.

\begin{lemma}{\rm \cite[Proposition~4.3]{lee_sakuma}}
\label{properties}
The following hold.
\begin{enumerate}[\indent \rm (1)]
\item Suppose $k=1$, i.e., $r=1/m$.
Then $S(r)=(m,m)$.

\item Suppose $k\ge 2$. Then each term of $S(r)$ is either $m$ or $m+1$,
and $S(r)$ begins with $m+1$ and ends with $m$.
Moreover, the following hold.

\begin{enumerate}[\rm (a)]
\item If $m_2=1$, then no two consecutive terms of $S(r)$ can be $(m, m)$,
so there is a sequence of positive integers $(t_1,t_2,\dots,t_s)$ such that
\[
S(r)=(t_1\langle m+1\rangle, m, t_2\langle m+1\rangle, m, \dots,
t_s\langle m+1\rangle, m).
\]
Here, the symbol ``$t_i\langle m+1\rangle$'' represents $t_i$ successive $m+1$'s.

\item If $m_2 \ge 2$, then no two consecutive terms of $S(r)$ can be $(m+1, m+1)$,
so there is a sequence of positive integers $(t_1,t_2,\dots,t_s)$ such that
\[
S(r)=(m+1, t_1\langle m\rangle, m+1, t_2\langle m\rangle,
\dots,m+1, t_s\langle m\rangle).
\]
Here, the symbol ``$t_i\langle m\rangle$'' represents $t_i$ successive $m$'s.
\end{enumerate}
\end{enumerate}
\end{lemma}

\begin{definition}
\label{def_T(r)}
{\rm
If $k\ge 2$, the symbol $T(r)$ denotes the sequence
$(t_1,t_2,\dots,t_s)$ in Lemma~\ref{properties},
which is called the {\it $T$-sequence of slope $r$}.
The symbol $CT(r)$ denotes the cyclic
sequence represented by $T(r)$, which is called the
{\it cyclic $T$-sequence of slope $r$}.
}
\end{definition}

\begin{example}
\label{cyclic_sequence}
{\rm (1) Let $r={10}/{37}=[3,1,2,3]$.
By Lemma~\ref{presentation}, we
see that the $S$-sequence of $\hat{u}_r$ is
\[
S(\hat{u}_r)=(3, 4, 4, 3, 4, 4, 3, 4, 4, 3).
\]
By the formula for $u_r$ in Lemma~\ref{presentation},
this implies
\[
S(r)=S(u_r) =
(\underbrace{4,4,4}_3,3,\underbrace{4, 4}_2, 3, \underbrace{4, 4}_2, 3,
\underbrace{4,4,4}_3,3, \underbrace{4, 4}_2, 3, \underbrace{4, 4}_2, 3).
\]
So $T(r)=(3, 2, 2, 3, 2, 2)$ and $CT(r) = \lp 3, 2, 2, 3, 2, 2 \rp$.

(2) Let $r={8}/{35}=[4,2,1,2]$.
Again by Lemma~\ref{presentation},
we obtain that the $S$-sequence of $\hat{u}_r$ is
\[
S(\hat{u}_r)=(4, 4, 5, 4, 4, 5, 4, 4).
\]
By the formula for $u_r$ in Lemma~\ref{presentation},
this implies
\[
S(r)=S(u_r)=
(5, \underbrace{4}_1, 5,
\underbrace{4, 4}_2, 5, \underbrace{4, 4}_2, 5, \underbrace{4}_1, 5,
\underbrace{4, 4}_2, 5, \underbrace{4, 4}_2).
\]
So $T(r) = (1, 2, 2, 1, 2, 2)$ and $CT(r) = \lp 1, 2, 2, 1, 2, 2 \rp$.
}
\end{example}

\begin{lemma}{\rm \cite[Proposition~4.4]{lee_sakuma}}
\label{induction1}
Let $\tilde{r}$ be the rational number defined as
\[
\tilde{r}=
\begin{cases}
[m_3, \dots, m_k] & \text{if $m_2=1$};\\
[m_2-1, m_3, \dots, m_k] & \text{if $m_2 \ge 2$}.
\end{cases}
\]
Then we have
\[
T(r)=
\begin{cases}
S(\tilde{r}) & \text{if $m_2=1$}; \\
\overleftarrow{S}(\tilde{r}) & \text{if $m_2 \ge 2$},
\end{cases}
\]
where $\overleftarrow{S}(\tilde{r})$ denotes the sequence obtained from
$S(\tilde{r})$ reversing its order.
\end{lemma}

\begin{proposition}{\rm \cite[Proposition~4.5]{lee_sakuma}}
\label{sequence}
The sequence $S(r)$ has a unique
decomposition $(S_1, S_2, S_1, S_2)$ which satisfies the following.
\begin{enumerate} [\indent \rm (1)]
\item Each $S_i$ is symmetric,
i.e., the sequence obtained from $S_i$ by reversing the order is
equal to $S_i$. {\rm (}Here, $S_1$ is empty if $k=1$.{\rm )}
\item Each $S_i$
{\rm (}if it is not empty{\rm )}
occurs only twice in the cyclic sequence $CS(r)$.
\item $S_1$
{\rm (}if it is not empty{\rm )}
begins and ends with $m+1$.
\item $S_2$ begins and ends with $m$.
\end{enumerate}
\end{proposition}

\begin{example}
{\rm (1) Let $r={10}/{37}=[3,1,2,3]$.
Recall from Example~\ref{cyclic_sequence} that
\[
S(r) =(4,4,4, 3, 4, 4, 3, 4, 4, 3, 4,4,4,3,4, 4, 3, 4, 4, 3).
\]
Putting $S_1=(4,4,4)$ and $S_2=(3, 4, 4, 3, 4, 4, 3)$, we have
\[
S(r)= (S_1, S_2, S_1, S_2),
\]
where $S_1$ and $S_2$ satisfy all the assertions in Proposition~\ref{sequence}.

(2) Let $r={8}/{35}=[4,2,1,2]$. Recall also from
Example~\ref{cyclic_sequence} that
\[
S(r) = (5, 4, 5, 4, 4, 5, 4, 4, 5, 4, 5, 4, 4, 5, 4, 4).
\]
Putting $S_1=(5, 4, 5)$ and $S_2=(4, 4, 5, 4, 4)$, we also have
\[
S(r) = (S_1, S_2, S_1, S_2),
\]
where $S_1$ and $S_2$ satisfy all the assertions in Proposition~\ref{sequence}.
}
\end{example}

\begin{corollary}{\rm \cite[Corollary~4.6]{lee_sakuma}}
\label{induction2}
The cyclic $S$-sequence $CS(r)$ is symmetric,
i.e., the cyclic sequence obtained from $CS(r)$
by reversing its cyclic order is equivalent to $CS(r)$
{\rm (}as a cyclic sequence{\rm )}.
In particular, in Lemma~\ref{induction1}, we actually have
\[
CT(r)=CS(\tilde{r}).
\]
\end{corollary}

\begin{remark}
\label{remark:recovering the slope}
{\rm
By using the fact that $u_r$ is obtained from
the line of slope $r$ in $\RR^2-\ZZ^2$
by reading its intersection with the vertical lattice lines,
we see that the slope $s=q/p$ is recovered from $CS(s)=\lp S_1,S_2,S_1,S_2 \rp$
by the rule that $p$ is the sum of the
terms of $S_1$ and $S_2$
whereas $q$ is the sum of the lengths of $S_1$ and $S_2$.
}
\end{remark}

\begin{lemma}{\rm \cite[Proof of Proposition~4.5]{lee_sakuma}}
\label{relation}
Let $\tilde{r}$
be the rational number defined as in Lemma~\ref{induction1}.
Also let
$S(\tilde{r})=(T_1, T_2, T_1, T_2)$
and $S(r) =(S_1, S_2, S_1, S_2)$
be decompositions described as in Proposition~\ref{sequence}.
Then the following hold.
\begin{enumerate} [\indent \rm (1)]
\item If $m_2=1$ and $k=3$, then
$T_1=\emptyset$, $T_2=(m_3)$,
and $S_1 =(m_3\langle m+1 \rangle)$, $S_2 =(m)$.

\item If $m_2=1$ and $k\ge 4$, then
$T_1=(t_1, \dots, t_{s_1})$, $T_2=(t_{s_1+1}, \dots, t_{s_2})$,
and
\[
\begin{aligned}
S_1
&=(t_1 \langle m+1 \rangle, m,
t_2 \langle m+1 \rangle,
\dots, t_{s_1-1}\langle m+1 \rangle, m,
t_{s_1}\langle m+1 \rangle), \\
S_2 &=(m, t_{s_1+1}\langle m+1\rangle,m, \dots,
m, t_{s_2}\langle m+1\rangle, m).
\end{aligned}
\]

\item If $k=2$, then $T_1=\emptyset$, $T_2=(m_2-1)$,
and $S_1 =(m+1)$, $S_2 =((m_2-1)\langle m \rangle)$.

\item If $m_2 \ge 2$ and $k\ge 3$, then
$T_1=(t_1, \dots, t_{s_1})$,
$T_2=(t_{s_1+1}, \dots, t_{s_2})$,
and
\[
\begin{aligned}
S_1 &=
(m+1, t_{s_1+1}\langle m\rangle, m+1,
\dots, m+1, t_{s_2}\langle m\rangle, m+1),\\
S_2 &=
(t_1\langle m\rangle, m+1,t_2\langle m\rangle, \dots,
t_{s_1-1}\langle m\rangle, m+1, t_{s_1}\langle m\rangle).
\end{aligned}
\]
\end{enumerate}
\end{lemma}

The following example will be useful in the second~\cite{lee_sakuma_3}
of this series of papers.

\begin{example}
\label{example:S-sequence}
{\rm
(1) If $r=[2, 1, n]$ with $n \ge 2$, then
$\tilde{r}=[n]$ by Lemma~\ref{induction1}.
So by Lemma~\ref{properties}(1),
$S(\tilde{r})=(\emptyset,n,\emptyset,n)$.
Thus by Lemma~\ref{relation}(1),
$S(r)=(n \langle 3 \rangle, 2, n \langle 3 \rangle, 2)$,
where $S_1=(n \langle 3 \rangle)$ and $S_2=(2)$.

(2) If $r=[2, n]$ with $n \ge 2$, then
$\tilde{r}=[n-1]$ by Lemma~\ref{induction1}.
So by Lemma~\ref{properties}(1),
$S(\tilde{r})=(\emptyset,n-1,\emptyset,n-1)$.
Thus by Lemma~\ref{relation}(3),
$S(r)=(3, (n-1) \langle 2 \rangle, 3, (n-1) \langle 2 \rangle)$,
where $S_1=(3)$ and $S_2=((n-1) \langle 2 \rangle)$.
}
\end{example}

By Lemmas~\ref{properties} and \ref{relation},
we can easily observe the following lemma.

\begin{lemma}
\label{cor-to-relation}
Let $S(r)=(S_1,S_2,S_1,S_2)$ be as in Proposition \ref{sequence}.
Then the following hold.
\begin{enumerate} [\indent \rm (1)]
\item If $m_2=1$, then $(m+1,m+1)$ appears in $S_1$.

\item If $m_2\ge 2$ and if $r \ne [m,2]=2/(2m+1)$,
then $(m,m)$ appears in $S_2$.
\end{enumerate}
\end{lemma}

The following is a refinement of \cite[Lemma~7.3 and Remark~5]{lee_sakuma}.

\begin{proposition}
\label{connection}
Suppose that $r$ is a rational number with $0<r<1$.
Let $S(r)= (S_1, S_2, S_1, S_2)$ be as in Proposition~\ref{sequence}.
For a rational number $s$ with $0 < s \le 1$, the following hold.
\begin{enumerate} [\indent \rm (1)]
\item If $CS(s)$ contains both $S_1$ and $S_2$ as subsequences,
then $s \notin I_1(r) \cup I_2(r)$.

\item If $r=1/p$ $(p \ge 2)$ and $s \in I_1(r) \cup I_2(r)$,
then $CS(s)$ consists of integers less than $p$.
\end{enumerate}
\end{proposition}

In the above proposition (and throughout this series of papers),
we mean by a {\it subsequence}
a subsequence without leap.
Namely a sequence $(a_1,a_2,\dots, a_l)$
is called a {\it subsequence} of a cyclic sequence,
if there is a sequence $(b_1,b_2,\dots, b_n)$
representing the cyclic sequence
such that $l \le n$ and
$a_i=b_i$ for $1\le i\le l$.

\begin{proof}
(1) Suppose that $CS(s)$ contains both $S_1$ and $S_2$
as subsequences.
Recall that $r=[m_1,m_2, \dots, m_k]$.
Write $s=[n_1,n_2, \dots, n_t]$,
where $t \ge 1$, $(n_1, \dots, n_t) \in (\mathbb{Z}_+)^t$ and
$n_t \ge 2$ unless $t=1$.
We show that the following three conditions hold
by induction on $k\ge 1$,
refining the proof of \cite[Lemma~7.3]{lee_sakuma}.
(As noted in \cite[Remark~5]{lee_sakuma},
this is equivalent to the desired conclusion
that $s \notin I_1(r) \cup I_2(r)$.)
\begin{enumerate}[\indent \rm (i)]
\item $t \ge k$.

\item $n_i=m_i$ for each $i=1, \dots, k-1$.

\item Either $n_k \ge m_k$ or both $n_k=m_k-1$ and $t>k$.
\end{enumerate}

First let $k=1$. Then $r=[m]$ and $S(r)=(m, m)=(S_2, S_2)$,
where $S_1$ is empty. By hypothesis, $CS(s)$ contains a term $m$.
So if $t=1$, then $n_1=m$,
while if $t \ge 2$, then $n_1$ is either $m$ or $m-1$.
Thus the three conditions hold, proving the base step.

Now let $k \ge 2$.
Then $CS(r)=\lp S_1, S_2, S_1, S_2 \rp$ consists of $m$ and $m+1$
by Lemma~\ref{properties}.
This yields that $CS(s)$ consists of $m$ and $m+1$.
This happens only when $t \ge 2$ and $n_1=m_1$.
For the rational numbers $r$ and $s$, define the rational numbers
$\tilde{r}$ and $\tilde{s}$ as in Lemma~\ref{induction1}.

We consider three cases separately.

\medskip
\noindent {\bf Case 1.} {\it $m_2=1$.}
\medskip

In this case, $k \ge 3$ and,
by Lemma~\ref{cor-to-relation},
$(m+1, m+1)$ appears in $S_1$ as a subsequence,
so in $CS(s)$ as a subsequence.
Thus by Lemma~\ref{properties},
$n_2=1$ and so $t \ge 3$.
So, we have
\[
\tilde{r}=[m_3, \dots, m_k] \quad \text{\rm and} \quad
\tilde{s}=[n_3, \dots, n_t].
\]
Let $S(\tilde{r})=(T_1, T_2, T_1, T_2)$ be
the decomposition of $S(\tilde{r})$
given by Proposition~\ref{sequence}.

\medskip
\noindent {\bf Case 1.a.} {\it $k=3$.}
\medskip

By Lemma~\ref{relation}(1),
$S_1 =(m_3\langle m+1 \rangle)$ and $S_2 =(m)$.
Since $S_1$ is contained in $CS(s)$ by assumption,
$CS(\tilde{s})=CT(s)$ contains a term
$m_3+d$ for some $d\in\ZZ_+\cup\{0\}$.
If $t=3$ then $CS(\tilde{s})=\lp n_3,n_3 \rp$ and hence
we have $n_3\ge m_3$.
If $t\ge 4$ then $CS(\tilde{s})$ consists of
$n_3$ and $n_3+1$ and hence $n_3\ge m_3-1$.
Thus in either case, the three conditions hold, as desired.

\medskip
\noindent {\bf Case 1.b.} {\it $k\ge 4$.}
\medskip

Since $S_2$ is contained in $CS(s)$ by assumption
and since $S_2$ begins and ends with $m$,
we see by Lemma~\ref{relation}(2) that
$CS(\tilde{s})=CT(s)$ contains $T_2$.
Similarly, by using the assumption that
$S_1$ is contained in $CS(s)$,
we see that $CS(\tilde{s})=CT(s)$ contains
a subsequence of the form
\[
(t_1+d', t_2, \dots, t_{s_1-1}, t_{s_1}+d''),
\]
where
$(t_1, t_2, \dots, t_{s_1-1},t_{s_1})=T_1$ and
$d', d''\in\ZZ_+\cup\{0\}$.
Since $t_1=t_{s_1}=m_3+1$ by
Proposition~\ref{sequence},
this actually implies that
$CS(\tilde{s})$ contains $T_1$ as a subsequence.
Thus $CS(\tilde{s})$ contains both $T_1$ and $T_2$
as subsequences.
Since $S(\tilde{r})=(T_1,T_2,T_1,T_2)$ is the decomposition
described as in Proposition~\ref{sequence},
the inductive hypothesis implies that
the following three conditions hold.
\begin{enumerate}[\indent \rm (i)]
\item $t \ge k$.

\item $n_i=m_i$ for each $i=3, \dots, k-1$.

\item Either $n_k \ge m_k$ or both $n_k=m_k-1$ and $t>k$.
\end{enumerate}
Since $n_1=m_1$ and $n_2=m_2$, this implies that
the original three conditions hold, as desired.

\medskip
\noindent {\bf Case 2.} {\it Both $m_2=2$ and $k=2$.}
\medskip

In this case, the three conditions always hold,
because if $n_2=1$ then we must have $t \ge 3$, otherwise $n_2 \ge 2=m_2$.
(Recall that we already proved that $n_1=m_1$.)

\medskip
\noindent {\bf Case 3.} {\it Either $m_2 \ge 3$ or both $m_2=2$ and $k \ge 3$.}
\medskip

In this case, by Lemma~\ref{cor-to-relation},
$(m,m)$ appears in $S_2$ as a subsequence,
so in $CS(s)$ as a subsequence.
Thus $n_2\ge 2$ by Lemma~\ref{properties}, and so we have
\[
\tilde{r}=[m_2-1,m_3, \dots, m_k] \quad \text{\rm and} \quad
\tilde{s}=[n_2-1,n_3, \dots, n_t].
\]
Let $S(\tilde{r})= (T_1, T_2, T_1, T_2)$ be
the decomposition of $S(\tilde{r})$
given by Proposition~\ref{sequence}.
Since $S_1$ is contained in $CS(s)$ by assumption
and since $S_1$ begins and ends with $m+1$,
we see by Lemma~\ref{relation}(4) that
$CS(\tilde{s})=CT(s)$ contains $T_2$.
Similarly, by using the assumption that
$S_2$ is contained in $CS(s)$,
we see that $CS(\tilde{s})=CT(s)$ contains
a subsequence of the form
\[
(t_{1}+d', t_2, \dots, t_{s_1-1},t_{s_1}+d''),
\]
where
$(t_{1}, t_2, \dots, t_{s_1-1},t_{s_1})=T_1$ and
$d', d''\in\ZZ_+\cup\{0\}$.
Since $t_{1}=t_{s_1}=(m_2-1)+1=m_2$ by
Proposition~\ref{sequence},
this actually implies that
$CS(\tilde{s})$ contains $T_1$ as a subsequence.
Thus $CS(\tilde{s})$ contains both $T_1$ and $T_2$
as subsequences.
Since $S(\tilde{r})=(T_1,T_2,T_1,T_2)$ is the decomposition
described as in Proposition~\ref{sequence},
the inductive hypothesis implies that
the following three conditions hold.
\begin{enumerate}[\indent \rm (i)]
\item $t \ge k$.

\item $n_i=m_i$ for each $i=2, \dots, k-1$.

\item Either $n_k \ge m_k$ or both $n_k=m_k-1$ and $t>k$.
\end{enumerate}
Since $n_1=m_1$,
this implies that the original three conditions hold, as desired.

(2) Assume $r=1/p$.
Since $0 \neq s \in I_1(1/p) \cup I_2(1/p)=\{0\} \cup [{1 \over {p-1}}, 1]$,
$s$ has a continued
fraction expansion $s=[n_1, \dots, n_t]$ such that
either both $n_1=p-1$ and $t=1$ or both $0< n_1 \le p-2$ and $t \ge 1$.
If $n_1=p-1$ and $t=1$, then $s=1/(p-1)$ and $CS(s)= \lp p-1,p-1 \rp$;
so the assertion holds.
If $0< n_1 \le p-2$ and $t \ge 1$, then
each term of $CS(s)$ is equal to $n_1 \le p-2$
or $n_1+1\le p-1$ by Lemma~\ref{properties};
so the assertion holds.
\end{proof}

\begin{remark}
\label{rem:extreme_case}
\rm
If $r\ne 1/2$, then the conclusions of
Proposition~\ref{connection}
hold even for $s=0$. In fact, if $r\ne 1/2$, then either
(i) $r=1/p$ with $p \ge 3$ and so $S(r)=(p, p)=(S_2, S_2)$,
where $S_1$ is empty, or
(ii) both $S_1$ and $S_2$ are non-empty.
Since $CS(u_0)=\lp 2 \rp$ by Remark~\ref{remark:j-term},
this implies the conclusions of
Proposition~\ref{connection} for $s=0$.
However, if $r=1/2$, then
none of the assertions of Proposition~\ref{connection}
holds for $s=0$.
\end{remark}

\subsection{Small cancellation conditions}

We now recall the small cancellation conditions for
$2$-bridge link groups established in \cite{lee_sakuma}.

Let $F(X)$ be the free group with basis $X$. A subset $R$ of $F(X)$
is said to be {\it symmetrized},
if all elements of $R$ are cyclically reduced and, for each $w \in R$,
all cyclic permutations of $w$ and $w^{-1}$ also belong to $R$.

\begin{definition}
{\rm Suppose that $R$ is a symmetrized subset of $F(X)$.
A nonempty word $b$ is called a {\it piece} if there exist distinct $w_1, w_2 \in R$
such that $w_1 \equiv bc_1$ and $w_2 \equiv bc_2$.
The small cancellation conditions $C(i)$ and $T(j)$ on $R$,
where $i$ and $j$ are integers such that $i \ge 2$ and $j \ge 3$,
are defined as follows (see \cite{lyndon_schupp}).
\begin{enumerate}[\indent \rm (1)]
\item Condition $C(i)$: If $w \in R$
is a product of $n$ pieces, then $n \ge i$.

\item Condition $T(j)$: For
$w_1, \dots, w_n \in R$
with no successive elements
$w_t, w_{t+1}$
an inverse pair $(t$ mod $n)$, if $n < j$, then at least one of the products
$w_1 w_2,\dots,$ $w_{n-1} w_n$, $w_n w_1$
is freely reduced without cancellation.
\end{enumerate}
}
\end{definition}

The following proposition enables us to
apply the small cancellation theory to our problem.

\begin{proposition}{\rm \cite[Theorem~5.1]{lee_sakuma}}
\label{small_cancellation_condition}
Suppose that $r$ is a rational number with $0 < r< 1$.
Let $R$ be the symmetrized subset of $F(a, b)$ generated
by the single relator $u_{r}$ of the upper presentation of $G(K(r))$.
Then $R$ satisfies $C(4)$ and $T(4)$.
\end{proposition}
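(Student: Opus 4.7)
The plan is to exploit the rigid structure of $u_r$ encoded in Proposition~\ref{sequence}: the $S$-sequence decomposes as $S(r)=(S_1,S_2,S_1,S_2)$ with each $S_i$ occurring exactly twice in the cyclic sequence $CS(r)$. Combined with Lemma~\ref{meaning_of_S-sequence+initial}, which determines an alternating word from its initial letter and $S$-sequence, this gives tight control over which subwords can serve as pieces.

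\textbf{Characterizing pieces.} First I would describe pieces concretely. A piece $b$ is a common prefix of two distinct elements $w_1, w_2$ of $R$, each a cyclic permutation of $u_r^{\pm 1}$. Since $r\neq\infty$ implies that $u_r$ is cyclically alternating, $b$ is itself alternating and its $S$-sequence is a (possibly boundary-truncated) subsequence of $CS(r)$, which by Corollary~\ref{induction2} is symmetric. The two-occurrence property of $S_1$ and $S_2$ then forces the following claim: if $S(b)$ contained $(S_1,S_2)$ or $(S_2,S_1)$ as a consecutive subsequence, both occurrences of that pattern in $CS(r)$ (in each direction) would already be pinned down by Lemma~\ref{meaning_of_S-sequence+initial}, so $b$ would be uniquely extendable, contradicting the existence of two distinct extensions $w_1,w_2$. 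Hence the $S$-sequence of any piece, after allowing at most one boundary term at each end to be a truncation of a $S_i$-block, contains no consecutive $(S_1,S_2)$ nor $(S_2,S_1)$.

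\textbf{Verifying $C(4)$.} Suppose for contradiction that some $w\in R$ is a product $b_1b_2b_3$ of three pieces. Then $S(w)=(S_1,S_2,S_1,S_2)$ is obtained by concatenating the $S$-sequences of $b_1,b_2,b_3$ with at most one boundary-term adjustment at each of the two interior junctions. By the characterization above, each $S(b_i)$ lies strictly inside some translate of a single $(S_i,S_{i+1})$ window, so each $b_i$ covers strictly fewer than two of the four syllable blocks of $(S_1,S_2,S_1,S_2)$. A length/pigeonhole count then shows that three such pieces cannot cover all four blocks, a contradiction, which gives $C(4)$.

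\textbf{Verifying $T(4)$.} For $n=3$, let $w_1,w_2,w_3\in R$ have no successive inverse pair and suppose for contradiction that every cyclic product $w_iw_{i+1}$ involves free cancellation. Since $|u_r|=2p$ is even and each $w_i$ is strictly alternating in the generators $a,b$, the first and last generators of each $w_i$ are different; cancellation at each of the three junctions then pins down the first letter of $w_{i+1}$ from the last letter of $w_i$. Tracking these forced letters around the triangle together with the $S$-sequence constraint of the previous paragraph forces two successive $w_i,w_{i+1}$ to coincide as words with $w_{i+1}^{-1}$, contradicting the no-adjacent-inverse-pair hypothesis. The main obstacle is the bookkeeping in the first step — especially the boundary analysis when a piece straddles a junction between an $S_1$-block and an $S_2$-block; once that is under control, both $C(4)$ and $T(4)$ reduce to pigeonhole and letter tracking.
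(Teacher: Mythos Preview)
This proposition is quoted from \cite[Theorem~5.1]{lee_sakuma} and is not proved in the present paper, so there is no in-paper argument to compare against directly. Judging from Lemma~\ref{max_piece} (which is \cite[Corollary~5.4(1)]{lee_sakuma}), the proof in \cite{lee_sakuma} proceeds by explicitly classifying the maximal $1$- and $2$-pieces of $(u_r)$ in terms of the decomposition $u_r\equiv v_1v_2v_3v_4$ with $S(v_1v_2)=S_1$, $S(v_2v_3)=S_2$, and then checking that every maximal $3$-piece is a proper subword. Your plan is the same in spirit---reduce everything to the $(S_1,S_2,S_1,S_2)$ structure---but stops short of that explicit classification.

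Your $T(4)$ sketch is essentially correct and in fact needs no $S$-sequence analysis at all: since $u_r$ is cyclically alternating of even length, every $w\in R$ begins with a power of one generator and ends with a power of the other. Tracking only the \emph{generator type} (not the sign) around a $3$-cycle of cancellations forces the first letter of $w_1$ to be of the same type as its last letter, a contradiction. So that part can be made rigorous in two lines.

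The $C(4)$ step, however, has a genuine gap. Your ``length/pigeonhole count'' does not close: already for $r=1/p$ a maximal $1$-piece has length $p-1$ while $|u_r|=2p$, so three pieces can have total length $3p-3\ge 2p$ once $p\ge 3$. The block-count version fails for the same reason: three arcs, each strictly contained in a two-block window, can still tile a four-block cycle. What is actually needed---and what \cite{lee_sakuma} supplies---is the sharper positional statement that each maximal $2$-piece terminates at a prescribed letter (one short of a block boundary, as in Lemma~\ref{max_piece}(2)), so that appending any further $1$-piece still leaves the product at least one letter short of the full relator. Your remark that ``the main obstacle is the bookkeeping \ldots\ when a piece straddles a junction'' is exactly right, but the sketch as written does not carry out that bookkeeping, and the pigeonhole substitute you offer is not valid.
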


To conclude this section,
we recall a key fact concerning the cyclic word $(u_r)$,
which is used in the proof of the main theorem in
Section~\ref{sec:only_if_part}.
(In fact, it is also used in the proof of
Proposition~\ref{small_cancellation_condition} above
and that of Lemma~\ref{degree2_vertices} implicitly.)

\begin{definition}
\label{def:n-piece}
{\rm
For a positive integer $n$,
a non-empty subword $w$ of the cyclic word $(u_r)$
is called a {\it maximal $n$-piece}
if $w$ is a product of $n$ pieces and
if any subword $w'$ of $(u_r)$
which properly contains $w$ as an {\it initial} subword
is not a product of $n$ pieces.}
\end{definition}

\begin{lemma}{\rm \cite[Corollary~5.4]{lee_sakuma}}
\label{max_piece}
Suppose that $r$ is a rational number such that $0 < r <1$.
Let $u_r$ be the single relator of the upper presentation of $G(K(r))$,
and let $S(r)=(S_1, S_2, S_1, S_2)$ be as in Proposition~\ref{sequence}.
Decompose
\[
u_r \equiv v_1 v_2 v_3 v_4,
\]
where $S(v_1)=S(v_3)=S_1$ and $S(v_2)=S(v_4)=S_2$.
Let $v_{ib}^*$
be the maximal proper initial subword of $v_i$,
i.e., the initial subword of $v_i$
such that $|v_{ib}^*|=|v_i|-1$ $(i=1,2,3,4)$.
Then the following hold,
where $v_{ib}$ and $v_{ie}$ are nonempty initial and terminal subwords of $v_i$
with $|v_{ib}|, |v_{ie}| \le |v_i|-1$, respectively.

\begin{enumerate}[\indent \rm (1)]
\item If $r=1/p$ for some integer $p \ge 2$, then
$v_1$ and $v_3$ are the empty words
and the following hold.

\begin{enumerate}[\rm (a)]

\item The following is the list of all
maximal $1$-pieces of $(u_r)$,
arranged in the order of
the position of the initial letter:
\[
v_{2b}^*, v_{2e}, v_{4b}^*, v_{4e}.
\]

\item The following is the list of all
maximal $2$-pieces of $(u_r)$,
arranged in the order of
the position of the initial letter:
\[
v_2, v_{2e} v_{4b}^*, v_4, v_{4e} v_{2b}^*.
\]
\end{enumerate}

\item If $r \neq 1/p$ for any integer $p \ge 2$, then the following hold.

\begin{enumerate}[\rm (a)]

\item The following is the list of all
maximal $1$-pieces of $(u_r)$,
arranged in the order of
the position of the initial letter:
\[
v_{1b}^*, v_{1e} v_2, v_2 v_{3b}^*, v_{2e} v_{3b}^*,
v_{3b}^*, v_{3e} v_4, v_4 v_{1b}^*, v_{4e} v_{1b}^*.
\]

\item The following is the list of all
maximal $2$-pieces of $(u_r)$,
arranged in the order of
the position of the initial letter:
\[
v_1 v_2, v_{1e} v_2 v_{3b}^*,
v_2 v_{3} v_4,
v_{2e} v_3 v_4,
v_3 v_4, v_{3e} v_4 v_{1b}^*,
v_4 v_{1} v_2,
v_{4e} v_1 v_2.
\]
\end{enumerate}
\end{enumerate}
\end{lemma}

The following corollary will be used in the sequels
\cite{lee_sakuma_3} and \cite{lee_sakuma_4} of the present paper.

\begin{corollary}
\label{cor:max_piece_2}
If $r$ is a rational number such that $0<r<1$ and
$r \neq 1/p$ for any integer $p \ge 2$, then the following hold.

\begin{enumerate}[\indent \rm (1)]
\item A subword $w$ of the cyclic word
$(u_r^{\pm 1})$ is a piece if and only if
$S(w)$ does not contain $S_1$ as a subsequence
and does not contain $S_2$ in its interior,
i.e., $S(w)$ does not contain a subsequence
$(\ell_1, S_2, \ell_2)$ for
any $\ell_1,\ell_2\in\ZZ_+$.

\item For a subword $w$ of the cyclic word $(u_r^{\pm 1})$,
if $S(w)$ either contains
$(S_1,S_2)$ as a proper initial subsequence
or contains
$(S_2,S_1)$ as a proper terminal subsequence,
then $w$ is not a product of two pieces.
\end{enumerate}
\end{corollary}

\begin{proof}
(1) We prove the assertion for a subword of
the cyclic word $(u_r)$.
(The assertion for a subword of
the cyclic word $(u_r^{-1})$ follows from this
and the facts that
$S_1$ and $S_2$ are symmetric and that
$w$ is a piece if and only if $w^{-1}$ is a piece.)
To show the only if part, let $w$ be a
piece which is a subword of $(u_r)$.
Suppose on the contrary that
$S(w)$ contains $S_1$ or $(\ell_1, S_2, \ell_2)$
($\ell_1,\ell_2\in\ZZ_+$) as a subsequence.
Let $w'$ be a subword of $w$ corresponding to
the subsequence.
Then, since each of $S_1$ and $S_2$ appears
only twice in the cyclic sequence $CS(r)$
by Proposition~\ref{sequence}(2),
we have the following.
\begin{enumerate} [\indent \rm (i)]
\item If $S(w')=S_1$, then $w'=v_1$ or $v_3$.

\item If $S(w')=(\ell_1, S_2, \ell_2)$,
then $w'=v_{1e}v_2v_{3b}$ or $v_{3e}v_4v_{1b}$.
\end{enumerate}
In either case, $w'$ cannot be a subword of any of the
maximal $1$-pieces of $(u_r)$ listed in Lemma~\ref{max_piece}(2a).
Hence, $w$ is not a piece, a contradiction.
To see the if part, let $w$ be a subword of $(u_r)$
whose $S$-sequence
does not contain $S_1$ nor $(\ell_1, S_2, \ell_2)$
($\ell_1,\ell_2\in\ZZ_+$) as a subsequence.
Then we see by using Proposition~\ref{sequence}(2) that
$w$ does not contain $v_1$, $v_3$, $v_{1e}v_2v_{3b}$, nor
$v_{3e}v_4v_{1e}$ as a subword.
Since $w$ is a subword of $(u_r)$,
this implies that
$w$ is a subword of one of the
maximal $1$-pieces of $(u_r)$ listed in Lemma~\ref{max_piece}(2a).
Hence $w$ is a piece.

(2) As in (1), we prove the assertion for a subword of
the cyclic word $(u_r)$.
Suppose that $w$ is a subword of $(u_r)$ such that
$S(w)$ contains either $(S_1,S_2,\ell)$ or $(\ell,S_2,S_1)$
with $\ell \in \ZZ_+$.
Let $w'$ be a subword of $w$ corresponding to
the subsequence.
Then, since $S_1$ and $S_2$ appears
only twice in the cyclic sequence $CS(r)$
by Proposition~\ref{sequence}(2),
we have the following.
\begin{enumerate} [\indent \rm (i)]
\item If $S(w')=(S_1,S_2,\ell)$, then $w'=v_1v_2v_{3b}$ or $v_3v_4v_{1b}$.

\item If $S(w')=(\ell,S_2,S_1)$,
then $w'=v_{1e}v_2v_3$ or $v_{3e}v_4v_1$.
\end{enumerate}
In either case, $w'$ cannot be a subword of any of the
maximal $2$-pieces of $(u_r)$ listed in Lemma~\ref{max_piece}(2b).
Hence, $w$ is not a product of two pieces, as desired.
\end{proof}

\section{Annular diagrams over 2-bridge link groups}
\label{annular_diagrams}

In this section, we establish a very strong
structure theorem (Theorems~\ref{structure} and \ref{cor:structure}) for the annular diagram
which arises in the study of the conjugacy problem
by using the small cancellation theory.
The structure theorem forms
the cornerstone of this whole series of papers.

Let us begin with necessary definitions and notation following \cite{lyndon_schupp}.
A {\it map} $M$ is a finite $2$-dimensional cell complex
embedded in $\RR^2$.
To be precise, $M$ is a finite collection of vertices ($0$-cells), edges ($1$-cells),
and faces ($2$-cells) in $\RR^2$ satisfying the following conditions.
\begin{enumerate}[\indent \rm (i)]
\item A vertex is a point in $\RR^2$.

\item An edge $e$ is homeomorphic to an open interval
such that $\bar e=e\cup\{ a\}\cup \{b\}$,
where $a$ and $b$ are vertices of $M$ which are possibly identical.

\item For each face $D$ of $M$,
there is a continuous map $f$ from the
$2$-ball $B^2$ to $\RR^2$ such that
\begin{enumerate}
\item the restriction of $f$ to the interior of $B^2$
is a homeomorphism onto $D$, and

\item the image of $\partial B^2$ is equal to
$\cup_{i=1}^k \bar e_i$ for some set $\{e_1,\dots, e_k\}$ of edges of $M$.
\end{enumerate}
\end{enumerate}
The underlying space of $M$, i.e., the union of the cells in $M$,
is also denoted by the same symbol $M$.
The boundary (frontier), $\partial M$, of $M$ in $\RR^2$
is regarded as a $1$-dimensional subcomplex of $M$.
An edge may be traversed in either of two directions.
If $v$ is a vertex of a map $M$, $d_M(v)$, the {\it degree of $v$}, will
denote the number of oriented edges in $M$ having $v$ as initial vertex.
A vertex $v$ of $M$ is called an {\it interior vertex}
if $v\not\in \partial M$, and an edge $e$ of $M$ is called
an {\it interior edge} if $e\not\subset \partial M$.

A {\it path} in $M$ is a sequence of oriented edges $e_1, \dots, e_n$ such that
the initial vertex of $e_{i+1}$ is the terminal vertex of $e_i$ for
every $1 \le i \le n-1$. A {\it cycle} is a closed path, namely
a path $e_1, \dots, e_n$
such that the initial vertex of $e_1$ is the terminal vertex of $e_n$.
If $D$ is a face of $M$, any cycle of minimal length which includes
all the edges of the boundary, $\partial D$, of $D$
going around once along the boundary of $D$
is called a {\it boundary cycle} of $D$.
To be precise it is defined as follows.
Let $f: B^2 \rightarrow D$ be a continuous map satisfying
the condition (iii) above.
We may assume that $\partial B^2$ has a cellular structure
such that the restriction of $f$ to each cell is a homeomorphism.
Choose an arbitrary orientation of $\partial B^2$, and let
$\hat e_1, \dots, \hat e_n$ be the oriented edges of $\partial B^2$,
which are oriented in accordance with the orientation of $\partial B^2$
and which lie on $\partial B^2$ in this cyclic order with respect to the orientation of $\partial B^2$.
Let $e_i$ be the orientated edge $f(\hat e_i)$ of $M$.
Then the cycle $e_1, \dots, e_n$, is a boundary cycle of $D$.

\begin{definition}
{\rm A non-empty map $M$ is called a {\it $[p, q]$-map} if the following conditions hold.
\begin{enumerate}[\indent \rm (i)]
\item $d_M(v) \ge p$ for every
interior vertex $v$ of $M$.

\item $d_M(D) \ge q$ for every face $D$ of $M$.
\end{enumerate}
}
\end{definition}

\begin{definition}
{\rm Let $R$ be a symmetrized subset of the free group $F(X)$ with basis $X$.
An {\it $R$-diagram} is a map $M$ and a function $\phi$ assigning to
each oriented edge $e$ of $M$, as a {\it label},
a reduced word $\phi(e)$ in $X$ such that the following hold.
\begin{enumerate}[\indent \rm (i)]
\item If $e$ is an oriented edge of $M$ and $e^{-1}$ is the oppositely oriented edge,
then $\phi(e^{-1})=\phi(e)^{-1}$.

\item For any boundary cycle $\delta$ of any face of $M$,
$\phi(\delta)$ is a cyclically reduced word representing
an element of $R$.
(If $\alpha=e_1, \dots, e_n$ is a path in $M$, we define $\phi(\alpha) \equiv \phi(e_1) \cdots \phi(e_n)$.)
\end{enumerate}
}
\end{definition}

Let $D_1$ and $D_2$ be faces (not necessarily distinct) of $M$
with an edge $e \subseteq \partial D_1 \cap \partial D_2$.
Let $e \delta_1$ and $\delta_2e^{-1}$ be boundary cycles of $D_1$ and $D_2$, respectively.
Let $\phi(\delta_1)=f_1$ and $\phi(\delta_2)=f_2$. An $R$-diagram $M$
is said to be {\it reduced}
if one never has $f_2=f_1^{-1}$.
It should be noted that if $M$ is reduced
then $\phi(e)$ is a piece for every interior edge $e$ of $M$.

\begin{convention}{\rm \cite[Convention~1]{lee_sakuma}}
\label{convention}
{\rm Let $R$ be the symmetrized subset of $F(a, b)$ generated by the single relator $u_r$
of the upper presentation of $G(K(r))$. For any $R$-diagram $M$, we assume
that $M$ satisfies the following.
\begin{enumerate}[\indent \rm (1)]
\item $d_M(v) \ge 3$ for every interior vertex $v$ of $M$.

\item For every edge $e$ of $\partial M$, the label $\phi(e)$ is a piece.

\item For a path $e_1, \dots, e_n$ in $\partial M$ of length $n\ge 2$
such that the vertex $\bar{e}_i\cap \bar{e}_{i+1}$
has degree $2$ for $i=1,2,\dots, n-1$,
$\phi(e_1) \phi(e_2) \cdots\phi(e_n)$ cannot be expressed as a product of fewer than $n$ pieces.
\end{enumerate}
Indeed, we may assume (1), because if there are two interior edges $e_1$ and $e_2$ meeting in
an interior vertex
of degree two, then we can delete the vertex $v$ and unite $e_1$ and $e_2$ into a single edge $e$
with label $\phi(e)=\phi(e_1)\phi(e_2)$.
To see (2), recall that the assumption that $M$ is reduced
implies that $\phi(e)$ is a piece for every interior edge $e$ of $M$.
On the other hand, since the cyclic word $(u_r)$ can be written as a product of pieces,
we may also assume that $\phi(e)$ is a piece for every edge $e$ in $\partial M$.
Finally, we may assume (3),
because if
$\phi(e_1) \cdots\phi(e_n)$
is expressed as a product of less than $n$ pieces,
then we can change the cellular structure
of the interval
$e_1\cup \cdots \cup e_n$
so that the new cellular structure has fewer vertices
compared with the original one.
}
\end{convention}

The following corollary is immediate from
Proposition~\ref{small_cancellation_condition}
and Convention~\ref{convention}.

\begin{corollary}{\rm \cite[Corollary~6.2]{lee_sakuma}}
\label{small_cancellation_condition_2}
Suppose that $r$ is a rational number with $0<r<1$.
Let $R$ be the symmetrized subset of $F(a, b)$ generated
by the single relator $u_{r}$ of the upper presentation of $G(K(r))$.
Then every reduced $R$-diagram is a $[4, 4]$-map.
\end{corollary}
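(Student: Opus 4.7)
The plan is to verify, for any reduced $R$-diagram $M$, the two defining conditions of a $[4,4]$-map: $d_M(D)\ge 4$ for every face $D$ and $d_M(v)\ge 4$ for every interior vertex $v$. This is a textbook deduction from $C(4)$ and $T(4)$ (Proposition~\ref{small_cancellation_condition}) once the setup of Convention~\ref{convention} is in place; see \cite{lyndon_schupp}. The only substance is to check that our hypotheses really do trigger the standard machinery.

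For the face-degree bound I would first argue that every edge of $M$ carries a piece as its label. For a boundary edge this is exactly Convention~\ref{convention}(2). For an interior edge $e$ between faces $D_1, D_2$ (possibly equal), take boundary cycles $e\delta_1$ of $D_1$ and $\delta_2 e^{-1}$ of $D_2$ with $\phi(\delta_i)=f_i$; then the two elements $\phi(e)f_1$ and $\phi(e)f_2^{-1}$ of $R$ share the initial subword $\phi(e)$, and reducedness of $M$ (namely $f_2\ne f_1^{-1}$) forces them to be distinct, so $\phi(e)$ is a piece by definition. If the boundary cycle of a face $D$ has $n$ edges, its label is then a product of $n$ pieces representing an element of $R$, so $C(4)$ gives $n\ge 4$.

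For the interior-vertex bound I would run the classical $T(4)$ argument. At an interior vertex $v$ of degree $n$, enumerate the incident faces $D_1,\dots,D_n$ cyclically and choose boundary cycles starting at $v$ so that the label $w_i\in R$ of $D_i$ ends with the oriented edge shared with $D_{i+1}$ and $w_{i+1}$ begins with its inverse. Each product $w_iw_{i+1}$ then has at least one letter of cancellation, so no cyclic product is freely reduced without cancellation. Reducedness of $M$ precisely rules out $w_{i+1}=w_i^{-1}$, so the hypotheses of $T(4)$ are met, and $n<4$ would contradict it. Hence $n\ge 4$, which upgrades Convention~\ref{convention}(1)'s bound $\ge 3$ to $\ge 4$. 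The only bookkeeping wrinkle I anticipate is the case $D_1=D_2$ in the face argument, where one must keep $e$ and $e^{-1}$ as distinct occurrences in the single boundary cycle before invoking the definition of a piece; once that is handled, the corollary is just a packaging of Proposition~\ref{small_cancellation_condition} with Convention~\ref{convention}.
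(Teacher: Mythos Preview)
Your argument is correct and is exactly the standard deduction the paper is invoking: the paper gives no proof beyond the sentence ``The following corollary is immediate from Proposition~\ref{small_cancellation_condition} and Convention~\ref{convention},'' and what you wrote is precisely how one unpacks that. The face bound via ``every edge label is a piece, hence $C(4)$ forces $d_M(D)\ge4$'' and the interior-vertex bound via $T(4)$ are the textbook steps from \cite[Chapter~V]{lyndon_schupp}, so there is no difference in approach to report.
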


We turn to interpreting conjugacy in terms of diagrams.

\begin{definition}
\label{def:annular_map}
{\rm
An {\it annular map} $M$ is a connected map such that $\RR^2-M$ has exactly two connected components.
It is said to be {\it nontrivial}
if it contains $2$-cells.
For a symmetrized subset $R$ of $F(a, b)$,
an {\it annular $R$-diagram} is an $R$-diagram
whose underlying map is an annular map.
}
\end{definition}

Let $M$ be an annular $R$-diagram, and let $K$ and $H$ be, respectively,
the unbounded and bounded components of $\RR^2-M$. We call
$\partial K (\subset \partial M)$
the {\it outer boundary} of $M$, while
$\partial H (\subset \partial M)$
is called the {\it inner boundary} of $M$.
Clearly, the {\it boundary} of $M$, $\partial M$, is the union of the outer boundary and the inner boundary.
A cycle of minimal length which contains all the edges in the outer (inner, resp.)
boundary of $M$ going around once along the boundary of $K$ ($H$, resp.)
is an {\it outer {\rm (}inner, {\rm resp.)} boundary cycle} of $M$.
An {\it outer {\rm (}inner, {\rm resp.)} boundary label of $M$} is defined to be a word $\phi(\alpha)$ in $X$
for $\alpha$ an outer (inner, resp.) boundary cycle of $M$.

\begin{convention}
\label{convention2}
{\rm
Since $M$ is embedded in $\RR^2$,
each $2$-cell of $M$ inherits an orientation of $\RR^2$.
Throughout this series of papers,
we assume, unlike the usual orientation convention, that
$\RR^2$ is oriented so that the boundary cycles of the $2$-cells of $M$
are clockwise.
Thus the outer boundary cycles are clockwise
and inner boundary cycles are counterclockwise,
unlike the convention in \cite[p.253]{lyndon_schupp}.
}
\end{convention}

The following lemma is a well-known classical result in combinatorial group theory.

\begin{lemma}{\rm \cite[Lemmas~V.5.1 and V.5.2]{lyndon_schupp}}
\label{lyndon_schupp}
Suppose $G=\langle X \,|\, R \, \rangle$ with $R$ being symmetrized.
Let $u, v$ be two cyclically reduced words in $X$
which are not trivial in $G$ and which are not conjugate in $F(X)$.
Then $u$ and $v$ represent conjugate elements in $G$ if and only if
there exists a reduced nontrivial annular $R$-diagram $M$ such that
$u$ is an outer boundary label and $v^{-1}$ is an inner boundary label of $M$.
\end{lemma}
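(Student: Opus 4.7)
The plan is to prove both implications via van Kampen's lemma combined with standard surgery on diagrams, taking care to respect the orientation convention of Convention~\ref{convention2}.

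For the \emph{if} direction, I would start from a reduced annular $R$-diagram $M$ with outer boundary label $u$ and inner boundary label $v^{-1}$. Since $M$ is connected, I can choose an edge-path $\gamma$ in the $1$-skeleton of $M$ running from a vertex on the outer boundary to a vertex on the inner boundary, and set $w:=\phi(\gamma)$. Cutting $M$ along $\gamma$ produces a simply connected (disk) $R$-diagram $M'$ whose boundary cycle, read clockwise from the cut point, is $u\,w\,v^{-1}\,w^{-1}$ (the two copies of $\gamma$ appearing as $w$ and $w^{-1}$, with the inner boundary picked up with its counterclockwise orientation, which gives $v^{-1}$ flipped to $v^{-1}$ again depending on the starting endpoint). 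The standard van Kampen lemma for disk diagrams then says that this boundary word equals a product of conjugates of elements of $R$ in $F(X)$, so $u\,w\,v^{-1}\,w^{-1}=1$ in $G$, and hence $u$ and $v$ are conjugate in $G$ by $w$.

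For the \emph{only if} direction, I would run the argument in reverse. Assuming $u,v$ are conjugate in $G$, choose $w\in F(X)$ with $u=w v w^{-1}$ in $G$, and among all such $w$ pick one of minimal length (this is the key normalisation). The word $u\,w\,v^{-1}\,w^{-1}$ is then trivial in $G$ but (by hypothesis) non-trivial in $F(X)$, so van Kampen's lemma for disk diagrams yields a reduced disk $R$-diagram $N$ with this word as its boundary label. I would mark the four consecutive subarcs of $\partial N$ carrying the labels $u$, $w$, $v^{-1}$, $w^{-1}$, and then glue the $w$-arc to the $w^{-1}$-arc by the unique label-preserving, orientation-reversing homeomorphism. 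Because $w$ and $w^{-1}$ are inverse words of equal length, this identification is well-defined edgewise and produces a $2$-complex embeddable in $\RR^2$ as an annulus, with $u$ as the outer label and $v^{-1}$ as the inner label, as required by the orientation convention.

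The main obstacle is ensuring that the annular diagram obtained after gluing is \emph{reduced} and non-degenerate. Two things can go wrong: the gluing can force two faces of $N$ that were originally separated by the $w$-arc to become adjacent across a mirror-pair edge, and iterated folding at such edges could in principle collapse the annulus entirely. I would handle the first issue by the usual reduction procedure: whenever adjacent faces $D_1,D_2$ share an edge with $\phi(\partial D_1)=\phi(\partial D_2)^{-1}$, delete the common edge and identify the two boundary cycles, strictly decreasing the face count; iterating terminates in finitely many steps. The hypothesis that $u$ and $v$ are not conjugate in $F(X)$ is then exactly what prevents the terminal diagram from being face-free, since a face-free annular diagram would exhibit a free-group conjugacy between $u$ and $v$. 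Finally, I would verify that the reduced diagram can be adjusted to satisfy the mild Convention~\ref{convention} (removing degree-$2$ interior vertices by amalgamating consecutive edges and relabelling), which is a purely bookkeeping step and does not affect the underlying combinatorics.
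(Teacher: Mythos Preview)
The paper does not prove this lemma; it is stated as a classical result and attributed to \cite[Lemmas~V.5.1 and V.5.2]{lyndon_schupp} without further argument. Your sketch is essentially the standard proof found in that reference: cut an annular diagram along a transversal path to obtain a disk diagram whose boundary word exhibits the conjugacy (the \emph{if} direction), and conversely build a disk diagram for $u\,w\,v^{-1}\,w^{-1}$ and glue the two $w$-arcs to form an annulus, then reduce (the \emph{only if} direction). The overall strategy is correct and matches the cited source.

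A few places would need tightening in a full write-up. Your parenthetical about orientations is hedged and a bit muddled; under Convention~\ref{convention2} the cut disk genuinely has clockwise boundary label $u\,w\,v^{-1}\,w^{-1}$, and you should assert this directly. More substantively, the word $u\,w\,v^{-1}\,w^{-1}$ need not be reduced at the four junctions even when $|w|$ is minimal (minimality only prevents $w$ from cancelling into \emph{both} neighbours at once), so the disk-diagram step requires either the form of van Kampen's lemma that tolerates non-reduced boundary words or a preliminary folding of the boundary; Lyndon--Schupp handle this explicitly. Your observation that the hypothesis ``$u$ and $v$ not conjugate in $F(X)$'' is exactly what prevents the reduced annular diagram from collapsing to a face-free diagram is correct and is the key use of that hypothesis. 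Finally, the closing remark about adjusting the diagram to satisfy Convention~\ref{convention} is out of place here: that convention is specific to the symmetrized set generated by $u_r$, whereas Lemma~\ref{lyndon_schupp} is stated for arbitrary symmetrized $R$.
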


The following lemma will play an essential role
in the proof of Theorem~\ref{structure} below.

\begin{lemma}{\rm \cite[Theorem~V.3.1]{lyndon_schupp}}
\label{inequality}
Let $M$ be an arbitrary connected map. Then
\[
4-4h \le \sum_{v \in \partial M} (3-d_M(v))+ \sum_{v \in M -\partial M} (4-d_M(v))+ \sum_{D \in M} (4-d_M(D)),
\]
where $h$ is the number of holes of $M$,
i.e., the number of bounded components of $\RR^2-M$.
In particular,
if $M$ is a $[4,4]$-map, then
\[
4-4h \le \sum_{v \in \partial M} (3-d_M(v)).
\]
\end{lemma}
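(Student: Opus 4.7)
The plan is to run the classical Euler-characteristic argument behind Theorem~V.3.1 of Lyndon--Schupp. Write $V$, $E$, $F$ for the numbers of vertices, edges, and faces of $M$, and $V_\partial$, $E_\partial$ for the numbers of those lying in $\partial M$. Since $M$ is a connected planar $2$-complex with exactly $h$ bounded complementary regions, its Euler characteristic equals $1-h$. I would first record this as
\[
V - E + F = 1 - h, \qquad\text{hence}\qquad 4 - 4h = 4V - 4E + 4F.
\]

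Next I would rewrite the right-hand side of the asserted inequality via two double-counting identities. From the handshake identity $\sum_{v} d_M(v) = 2E$ I obtain
\[
\sum_{v\in M-\partial M}(4-d_M(v)) + \sum_{v\in\partial M}(3-d_M(v)) \;=\; 4V - 2E - V_\partial,
\]
and from the observation that every edge of $M$ not lying in $\partial M$ contributes $2$ to $\sum_D d_M(D)$ (it bounds two face-sides) while every edge in $\partial M$ contributes at most $1$, I get
\[
\sum_D d_M(D) \;\le\; 2E - E_\partial.
\]

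Assembling these, I would observe the algebraic identity
\[
4V - 4E + 4F \;=\; \bigl(4V - 2E - V_\partial\bigr) + \bigl(4F - \textstyle\sum_D d_M(D)\bigr) + \bigl(V_\partial + \textstyle\sum_D d_M(D) - 2E\bigr),
\]
in which the last bracket is bounded above by $V_\partial - E_\partial$; this vanishes because $\partial M$ is a disjoint union of simple closed curves, one around each component of $\RR^2 - M$, so $V_\partial = E_\partial$. This yields the first inequality of the lemma. The "in particular" assertion for $[4,4]$-maps is then immediate: the hypotheses force both $\sum_{v\in M-\partial M}(4-d_M(v)) \le 0$ and $\sum_D(4-d_M(D))\le 0$, so only the boundary-vertex sum is needed as an upper bound.

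The main delicate point is justifying $V_\partial = E_\partial$ in full generality: a priori $\partial M$ could contain dangling trees or even isolated vertices, which would give $\chi(\partial M)>0$. In the intended applications only diagrams whose frontier is a disjoint union of embedded circles arise, so this is harmless; but in the fully general setting one compensates for any Euler-characteristic excess of $\partial M$ by noting that a dangling boundary edge contributes $2$ rather than $1$ to $2E - \sum_D d_M(D)$, and this surplus exactly absorbs the defect in $V_\partial - E_\partial$.
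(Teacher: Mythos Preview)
Your argument is essentially the paper's: both start from $V-E+F=1-h$, multiply by $4$, rewrite via $\sum_v d_M(v)=2E$ together with a face--degree relation, and reduce the inequality to bounding a single boundary term.

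The one place where your write-up is incomplete, and where the paper is more careful, is the treatment of that boundary term. You count boundary edges without multiplicity, deduce $\sum_D d_M(D)\le 2E-E_\partial$, and then need $V_\partial-E_\partial\le 0$; your justification, that $\partial M$ is a disjoint union of simple circles so that $V_\partial=E_\partial$, is false for general maps (the paper itself draws non-circle boundaries in the figure accompanying this very proof). Your final-paragraph salvage is the right idea but is only a sketch.

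The paper carries this out cleanly by counting boundary edges \emph{with multiplicity}: letting $E^\bullet$ be the total number of edge-appearances in the boundary cycles of $M$, one obtains the exact identity $2E=\sum_D d_M(D)+E^\bullet$, since each of the $2E$ edge-sides lies in exactly one face or exactly one boundary cycle. The quantity to bound becomes $V^\bullet-E^\bullet$ with $V^\bullet=V_\partial$; since $E^\bullet\ge E_\partial$ one has $V^\bullet-E^\bullet\le V_\partial-E_\partial=\chi(\partial M)$. The paper then argues $\chi(\partial M)\le 0$ because each component $C$ of $\partial M$ has $\RR^2\setminus C$ disconnected, hence $\beta_1(C)\ge 1$, so $\chi(\partial M)=\beta_0(\partial M)-\beta_1(\partial M)\le 0$. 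This Betti-number step is the fully-justified form of the compensation you allude to in your final paragraph.
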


In the above lemma and throughout this paper,
the symbol $v \in X$ ($D \in X$, respectively)
under the symbol $\sum$, where $X$ is a map $M$ or a subspace of a map $M$,
means that the sum is over the vertices $v$ (the faces $D$, respectively)
of the map $M$ contained in the subspace $X$.

\begin{proof}
We repeat the proof of a part of \cite[Theorem~V.3.1]{lyndon_schupp}.
Put
\[
\begin{aligned}
V= &\ \text{\rm the number of vertices of $M$;} \\
E= &\ \text{\rm the number of (unoriented) edges of $M$;} \\
F= &\ \text{\rm the number of faces of $M$;} \\
V^{\bullet}= &\ \text{\rm the number of vertices in $\partial M$;} \\
E^{\bullet}= &\ \text{\rm the number of (unoriented) edges in $\partial M$,
{\it counted with multiplicity}.}
\end{aligned}
\]
To be more precise, $E^{\bullet}$ means the number of (unoriented) edges on $\partial M$
with an edge counted twice if it appears twice in the cycles necessary to
describe the boundary of $M$.
For example, the edge $e_0$ in each of Figure~\ref{fig.counting}(a)
and Figure~\ref{fig.counting}(b) has to be counted twice in computing $E^{\bullet}$,
because $e_0$ in Figure~\ref{fig.counting}(a) occurs twice in a boundary cycle of $M$
and $e_0$ in Figure~\ref{fig.counting}(b) occurs in both inner and outer boundary
cycles of $M$. However the vertex $v_0$ in each of Figure~\ref{fig.counting}(a)
and Figure~\ref{fig.counting}(b) has to be counted only once in computing $V^{\bullet}$.

\begin{figure}[h]
\includegraphics{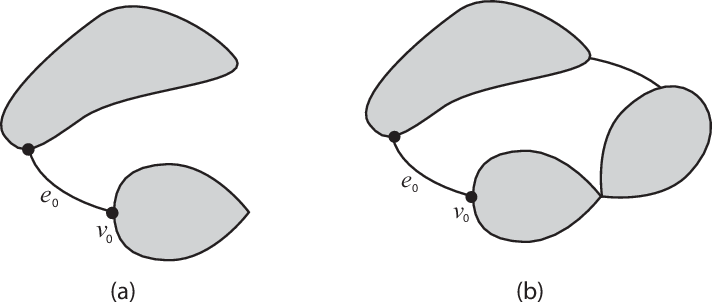}
\caption{
The edge $e_0$ is counted twice in computing $E^{\bullet}$,
while the vertex $v_0$ is counted once in computing $V^{\bullet}$.}
\label{fig.counting}
\end{figure}

Since $M$ has $h$ holes, it follows from Euler's Formula that
\[
1-h=V-E+F.
\tag{\dag}
\label{euler_formula}
\]
Putting $d(v)=d_M(v)$ for vertices $v$ of $M$ and $d(D)=d_M(D)$ for faces $D$ of $M$,
it is easy to observe that
\[
\begin{aligned}
2E&=\sum_{v \in M} d(v); \\
2E&=\sum_{D \in M} d(D) + E^{\bullet}.
\end{aligned}
\]
These equations together with (\ref{euler_formula}) yield
\[
\begin{aligned}
4-4h&=4V+4F-\sum_{v \in M} d(v)- \sum_{D \in M} d(D) - E^{\bullet} \\
&=\sum_{v \in M} (4-d(v)) + \sum_{D \in M} (4-d(D)) - E^{\bullet} \\
&=\sum_{v \in \partial M} (4-d(v)) + \sum_{v \in M -\partial M} (4-d(v))+ \sum_{D \in M} (4-d(D)) - E^{\bullet} \\
&=\sum_{v \in \partial M} (3-d(v)) + \sum_{v \in M -\partial M} (4-d(v))+ \sum_{D \in M} (4-d(D)) + V^{\bullet} - E^{\bullet}.
\end{aligned}
\]
Note that
\[
V^{\bullet} - E^{\bullet}
\le
V^{\bullet} - E^{\bullet}_0
=\chi(\partial M)
=\beta_0(\partial M)-\beta_1(\partial M)
\le 0,
\]
where $E^{\bullet}_0$ is
the number of (unoriented) edges in $\partial M$
(counted {\it without} multiplicity),
$\chi$ denotes the Euler characteristic, and
$\beta_i$ denotes the $i$-th Betti number.
The last inequality follows from the fact that
each component $C$ of $\partial M$ has a positive
first Betti number,
which in turn follows from the fact that
$\RR^2-C$ is not connected.
Hence, the required result follows.
\end{proof}

For a rational number $r$ with $0 < r< 1$,
let $R$ be the symmetrized subset of $F(a, b)$ generated
by the single relator $u_{r}$ of the upper presentation of $G(K(r))$.
Our goal of the present section is to describe the structure of
reduced annular $R$-diagrams with $u_s$ and $u_{s'}^{\pm 1}$, respectively,
as outer and inner boundary labels,
where $u_s$ and $u_{s'}$ ($s, s' \in I_1(r) \cup I_2(r)$) are the cyclically reduced words in
$\{a,b\}$ obtained from the simple loops $\alpha_s$ and $\alpha_{s'}$,
respectively, as in Lemma~\ref{presentation}.
We obtain
the following very strong structure theorem.

\begin{theorem}{\rm (Structure Theorem)}
\label{structure}
Suppose that $r$ is a rational number with $0<r<1$.
Let $R$ be the symmetrized subset of $F(a, b)$ generated
by the single relator $u_{r}$ of the upper presentation of $G(K(r))$,
and let $S(r)= (S_1, S_2, S_1, S_2)$ be as in Proposition~\ref{sequence}.
Suppose that $M$ is a reduced nontrivial annular $R$-diagram such that,
for $\alpha$ and $\delta$ which are, respectively, arbitrary
outer and inner boundary cycles of $M$,
\begin{enumerate}[\indent \rm (i)]
\item the words $\phi(\alpha)$ and $\phi(\delta)$ are cyclically reduced;

\item the words $\phi(\alpha)$ and $\phi(\delta)$ are cyclically alternating;

\item
the cyclic $S$-sequences of the cyclic words $(\phi(\alpha))$ and $(\phi(\delta))$
do not contain $(S_1, S_2)$ nor $(S_2, S_1)$ as a subsequence.
\end{enumerate}
Let the outer and inner boundaries of $M$ be denoted by
$\sigma$ and $\tau$, respectively.
Then the following hold.
\begin{enumerate} [\indent \rm (1)]
\item The outer and inner boundaries $\sigma$ and $\tau$ are simple,
i.e., they are homeomorphic to the circle,
and there is no edge contained in $\sigma \cap \tau$.

\item $d_M(v)=2$ or $4$ for every vertex $v$ of $\partial M$.
Moreover, on both $\sigma$ and $\tau$, vertices of degree $2$
appear alternately with vertices of degree $4$.

\item $d_M(v)=4$ for every interior vertex $v$ of $M$.

\item $d_M(D)=4$ for every face $D$ of $M$.
\end{enumerate}
\end{theorem}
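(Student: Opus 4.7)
The plan is to extract all four conclusions from the curvature identity of Lemma~\ref{inequality}, using hypothesis~(iii) to force the curvature of $M$ to be concentrated in a very restricted way along $\partial M$. Tracking the equality that underlies the proof of Lemma~\ref{inequality} with $h=1$ gives
\[
0 \;=\; \sum_{v \in \partial M}(3-d_M(v)) \;+\; \sum_{v \in M-\partial M}(4-d_M(v)) \;+\; \sum_{D\in M}(4-d_M(D)) \;+\; (V^{\bullet}-E^{\bullet}).
\]
By Corollary~\ref{small_cancellation_condition_2} the second and third summands are nonpositive, and $V^{\bullet}-E^{\bullet}\le 0$ always. The central step is to bound the boundary sum $\sum_{v\in\partial M}(3-d_M(v))\le 0$, which then forces each of the four summands to vanish individually.

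To bound the boundary sum, I would establish two local restrictions on boundary vertices using hypothesis~(iii):
\begin{enumerate}[\indent \rm (a)]
\item every vertex of $\partial M$ has degree exactly $2$ or $4$;
\item no two consecutive vertices of a boundary cycle have degree~$2$.
\end{enumerate}
Both rest on the same observation: any path $e_1,\dots,e_n$ on $\partial M$ whose intermediate vertices all have degree~$2$ corresponds, via Convention~\ref{convention}(3), to a maximal $n$-piece of $(u_r)$ appearing as a subword of the boundary label $\phi(\alpha)$ or $\phi(\delta)$. Using the explicit description of maximal $n$-pieces of $(u_r)$ (the analog for arbitrary $0<r<1$ of Lemma~\ref{max_piece}), every maximal $n$-piece with $n\ge 3$ has $S$-sequence containing $(S_1,S_2)$ or $(S_2,S_1)$ as a subsequence, contradicting~(iii); this establishes~(b). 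A more delicate local analysis around a putative degree-$3$ or degree-$\ge 5$ vertex---comparing the $(S_1,S_2,S_1,S_2)$-decomposition of Proposition~\ref{sequence} and the cyclic symmetry of Corollary~\ref{induction2} against the $S$-sequence of the sub-arc of $\partial M$ near the vertex---again produces a forbidden subsequence, giving~(a). Claims (a) and (b) together yield $\#\{v\in C:d(v)=2\}\le\#\{v\in C:d(v)=4\}$ on each boundary cycle $C$, so the boundary sum is nonpositive.

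With all four summands equal to $0$, conclusions~(3) and~(4) are read off from the interior-vertex and face terms. The vanishing $V^{\bullet}-E^{\bullet}=0$, combined with $V^{\bullet}-E^{\bullet}_0=\chi(\partial M)\le 0$, forces $\chi(\partial M)=0$ and $E^{\bullet}=E^{\bullet}_0$, so $\partial M$ is a disjoint union of simple closed cycles, no edge of which is traversed twice. Since $M$ has a single hole, $\partial M=\sigma\sqcup\tau$ with $\sigma,\tau$ simple closed curves, yielding~(1). For~(2), the term-by-term vanishing of the boundary sum combined with~(a) gives $\#\{d=2\}=\#\{d=4\}$ on each cycle; then~(b) forces the $k$ degree-$2$ vertices to be equally spaced around a cycle of length $2k$, hence to alternate strictly with the degree-$4$ vertices.

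The main obstacle is the piece analysis supporting~(a) and~(b): one must establish the analog of Lemma~\ref{max_piece} for arbitrary $0<r<1$ and verify that every maximal $n$-piece with $n\ge 3$, as well as the short arcs around any putative degree-$3$ or degree-$\ge 5$ boundary vertex, unavoidably produces one of the forbidden subsequences $(S_1,S_2)$ or $(S_2,S_1)$ in the corresponding boundary label. Once this piece-theoretic work is in hand, the four conclusions follow mechanically from the curvature identity together with~(a) and~(b).
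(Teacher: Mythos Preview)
Your global--curvature strategy is natural, but there is a genuine gap in the step ``Claims (a) and (b) together yield $\#\{v\in C:d(v)=2\}\le\#\{v\in C:d(v)=4\}$ on each boundary cycle $C$, so the boundary sum is nonpositive.'' The implication from the per-cycle inequality to $\sum_{v\in\partial M}(3-d_M(v))\le 0$ is valid only when $\sigma$ and $\tau$ are disjoint: a vertex lying in $\sigma\cap\tau$ is counted once in each cycle's inequality but only once in the boundary sum, so a shared degree-$4$ vertex is over-counted. Concretely, the one-cell annular diagram of Figure~\ref{simplest} has $\sigma\cap\tau=\{v_0\}$, boundary vertices of degrees $4,2,2$, and boundary sum $(3-4)+(3-2)+(3-2)=+1>0$; here $V^\bullet-E^\bullet=-1$ and the identity balances. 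Since such diagrams do arise (the theorem only asserts that $\sigma\cap\tau$ contains no \emph{edge}), you cannot force $V^\bullet-E^\bullet=0$, and hence your derivation of conclusion~(1)---in fact a statement stronger than~(1), namely $\sigma\cap\tau=\emptyset$---breaks down. This is exactly why the paper treats the cases $\sigma\cap\tau=\emptyset$ and $\sigma\cap\tau\ne\emptyset$ separately, using extremal-disk and submap arguments (Lemma~\ref{degree2_vertices}(2) and Case~2 of the proof) to establish simplicity and the no-shared-edge claim \emph{before} the global count.

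There are two further inaccuracies. First, your local justification for~(a) is not right: ruling out degree~$3$ uses hypothesis~(ii) (alternation), not hypothesis~(iii), and degree~$\ge 5$ cannot be excluded by any local piece analysis---it is a consequence of the curvature count, not an input to it. The correct preliminary statement is ``degree~$2$ or~$\ge 4$,'' obtained from~(i) and~(ii). Second, Convention~\ref{convention}(3) only says that $\phi(e_1)\cdots\phi(e_n)$ is not a product of fewer than $n$ pieces; it does not say this word is a subword of $(u_r)$. That conclusion requires the path $e_1,\dots,e_n$ to lie on the boundary of a single face, which in turn presupposes no doubly-traversed boundary edge---again part of what you are trying to prove. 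The paper's Lemma~\ref{degree2_vertices}(1) is stated for edges in $\partial D\cap\partial M$ precisely to sidestep this circularity.
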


Before proving the theorem,
we prepare the following lemma.

\begin{lemma}
\label{degree2_vertices}
Under the assumption of Theorem~\ref{structure},
the following hold.
\begin{enumerate} [\indent \rm (1)]
\item There is no face $D$ in $M$ such that
$\partial D \cap \partial M$ contains three edges
$e_1$, $e_2$ and $e_3$ such that
$\bar{e}_1 \cap \bar{e}_2=\{v_1\}$ and $\bar{e}_2 \cap \bar{e}_3=\{v_2\}$,
where $d_M(v_i)=2$ for each $i=1, 2$.

\item The outer and inner boundaries $\sigma$ and $\tau$ are simple.
\end{enumerate}
\end{lemma}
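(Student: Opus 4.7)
The plan is to prove both parts by contradiction, combining the small-cancellation piece structure of $(u_r)$ developed in Section~\ref{preliminaries} with the Euler-characteristic inequality Lemma~\ref{inequality}.

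For part (1), I would suppose for contradiction that such a face $D$ and edges $e_1,e_2,e_3$ with degree-$2$ intermediate vertices $v_1,v_2\in\partial M$ exist. By Convention~\ref{convention}(2) each label $\phi(e_i)$ is a piece, and since $d_M(v_1)=d_M(v_2)=2$, Convention~\ref{convention}(3) forces the product
\[
w:=\phi(e_1)\phi(e_2)\phi(e_3)
\]
to be a word in $(u_r)$ that cannot be expressed as a product of fewer than three pieces. Because $e_1,e_2,e_3$ are consecutive edges along $\partial D$, $w$ is read off along a portion of a boundary cycle of $D$, hence is visually a subword of some cyclic permutation of $u_r^{\pm 1}$. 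The structural description of maximal $n$-pieces of $(u_r)$ established in \cite{lee_sakuma} (of which Lemma~\ref{max_piece} is the special case $r=1/p$) shows that any such subword spans enough of $(u_r)$ that its $S$-sequence must contain $(S_1,S_2)$ or $(S_2,S_1)$ as a subsequence. On the other hand, since $e_1,e_2,e_3\subset\partial M$, the word $w$ is also a subword of either $\phi(\alpha)$ or $\phi(\delta)$, and hypothesis~(iii) asserts that the cyclic $S$-sequences of these labels contain neither $(S_1,S_2)$ nor $(S_2,S_1)$ as a subsequence. This contradiction proves (1).

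For part (2), the argument for $\sigma$ and for $\tau$ is parallel, so I would concentrate on $\sigma$. Suppose $\sigma$ is not simple. Then $\sigma$ has either a repeated vertex or a cut vertex, and in either case there exists a simple closed subcurve $\sigma_0\subset\sigma$ bounding a topological disk $N\subset M$ that is disjoint from $\tau$. As a connected subcomplex of the reduced $R$-diagram $M$, the map $N$ is itself a reduced $R$-diagram which inherits the $[4,4]$-property and has $h=0$ holes. Applying Lemma~\ref{inequality} to $N$ gives
\[
4\le \sum_{v\in\partial N}(3-d_N(v)),
\]
which, since each interior face of $N$ has degree $\ge 4$ and each interior vertex has degree $\ge 4$, requires at least four boundary vertices of $N$ of degree $2$. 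Moreover, $\partial N\subset\sigma\subset\partial M$, so every edge of $\partial N$ carries a piece label by Convention~\ref{convention}(2), and vertices lying in the interior of $\partial N$ have the same degree in $N$ as in $M$.

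The main obstacle is to convert this degree count into a forbidden configuration for part (1). At any boundary vertex $v$ of $N$ with $d_N(v)=2$, the two edges of $\partial N$ incident to $v$ necessarily belong to a common face $D_v$, so each such $v$ is a degree-$2$ intermediate vertex lying between two consecutive boundary edges of a single face. A combinatorial accounting that tracks, for each face $D$ meeting $\partial N$, the number of consecutive edges of $\partial D$ lying on $\partial N$ together with the number of degree-$2$ intermediate vertices between them, combined with the prohibition of part (1) against two consecutive such degree-$2$ intermediate vertices along any face and the global bound $d_N(D)\ge 4$, then forces either such a forbidden triple $e_1,e_2,e_3$ to occur inside $N$ or else the total boundary defect $\sum_{v\in\partial N}(3-d_N(v))$ to fall short of $4$. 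Either conclusion contradicts what we have just established, completing the proof.
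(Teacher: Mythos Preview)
Your argument for part~(1) is essentially the paper's own: both proceed by contradiction, both use Convention~\ref{convention}(2)--(3) to see that $\phi(e_1)\phi(e_2)\phi(e_3)$ cannot be written as a product of fewer than three pieces, and both invoke the classification of maximal pieces from \cite{lee_sakuma} to conclude that the $S$-sequence of this subword forces $(S_1,S_2)$ or $(S_2,S_1)$ to appear in the cyclic $S$-sequence of the boundary label, contradicting hypothesis~(iii). The paper is slightly more explicit (it pins down the maximal $2$-piece forming a proper initial subword and cites \cite[Corollary~5.4 and Corollary~6.4]{lee_sakuma}), but your sketch is correct.

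For part~(2) your route differs from the paper's. The paper does not re-derive anything: it simply observes that a non-simple boundary produces an extremal disk attached at a single vertex, and then cites the Claim in the proof of \cite[Theorem~6.3]{lee_sakuma}, which directly furnishes a face with three consecutive boundary edges separated by degree-$2$ vertices, contradicting part~(1). You instead try to reprove that Claim on the spot via Lemma~\ref{inequality}. That strategy is sound, but your ``combinatorial accounting'' paragraph, as written, does not close. The missing ingredient is the exclusion of degree-$3$ boundary vertices: without it, the constraint from part~(1) (no two consecutive degree-$2$ vertices) still allows an alternating pattern of degree-$2$ and degree-$3$ vertices along $\partial N$, which makes $\sum_{v\in\partial N}(3-d_N(v))$ arbitrarily large and gives no contradiction with the bound~$\ge 4$. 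Once you use hypothesis~(ii) (cyclically alternating boundary labels) to show that every vertex of $\partial M$ has degree $2$ or $\ge 4$---exactly the argument the paper carries out at the start of the proof of Theorem~\ref{structure}---your count does work: with arcs of length at most $2$ on each face (from part~(1)) and inter-arc vertices of degree $\ge 4$, one gets $\sum_{v\in\partial N}(3-d_N(v))\le 2$, contradicting Lemma~\ref{inequality}. So your approach is viable, but you must make this use of hypothesis~(ii) explicit; the bound $d_N(D)\ge 4$ alone will not rescue the vertex count.
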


\begin{proof}
(1) Suppose on the contrary that there is a face $D$ in $M$
and edges $e_1$, $e_2$ and $e_3$ in $\partial D \cap \partial M$
satisfying the condition.
Then $e_1$, $e_2$ and $e_3$ form three consecutive edges
in the outer or inner boundary cycle,
say the outer boundary cycle $\alpha$.
By Convention~\ref{convention}(2)--(3), the product $\phi(e_1) \phi(e_2) \phi(e_3)$
which is a subword of the cyclic word $(u_r^{\pm 1})$ cannot be expressed
as a product of less than $3$ pieces.
Because of the symmetry of $S_1$ and $S_2$ (Proposition~\ref{sequence}(1))
and Lemma~\ref{meaning_of_S-sequence+initial},
we may assume without loss of generality that
$\phi(e_1) \phi(e_2) \phi(e_3)$ is a subword of
the cyclic word $(u_r)$.
We also assume that the length $k$
of the continued fraction $r=[m_1,m_2, \dots,m_k]$ is greater than $1$.
(The proof for the case $k=1$ is analogous to the proof
for the general case $k\ge 2$.)
Let $w_0$ be the maximal $2$-piece which forms a proper initial
subword of $\phi(e_1) \phi(e_2) \phi(e_3)$.
Then $w_0$ is equal to one of the words in
Lemma~\ref{max_piece}(2b).
If $w_0$ is equal to $v_1 v_2$ or $v_{1e} v_2 v_{3b}^*$, then
$\phi(e_1) \phi(e_2) \phi(e_3)$ contains a subword $w$
such that the $S$-sequence of
$w$ is $(S_1,S_2,\ell)$ or
$(\ell, S_2, S_1)$ accordingly,
for some positive integer $\ell$.
By Proposition~\ref{sequence}, this implies that
the cyclic $S$-sequences of one of the cyclic words $(\phi(\alpha))$ and $(\phi(\delta))$
contains $(S_1, S_2)$ or $(S_2, S_1)$ as a subsequence.
This contradicts the hypothesis (iii).
The remaining possibilities for $w_0$ can be treated similarly.

(2) Suppose on the contrary that $\sigma$ or $\tau$ is not simple.
Then there is an extremal disk, say $J$, which is properly
contained in $M$ and connected to the rest of $M$ by a single vertex.
Here, recall that an {\it extremal disk} of a map $M$
is a submap of $M$ which is topologically a disk
and which has a boundary cycle $e_1, \dots, e_n$
such that the edges $e_1, \dots, e_n$ occur in order
in some boundary cycle of the whole map $M$.
Clearly $J$ is a connected and simply connected map
having at least one face.
Furthermore, by Corollary~\ref{small_cancellation_condition_2},
$J$ is a $[4, 4]$-map.
Then by Lemma~\ref{inequality},
we have
\[ \tag{\ddag} \label{curvature_formula}
\sum_{v \in \, \partial J} (3-d_J(v)) \ge 4.
\]
Putting
\[
A=\{v \in \partial J \, | \, d_J(v)=2\} \quad \text{and} \quad
B=\{v \in \partial J \, | \, d_J(v) \ge 4\},
\]
it is easy to see that $A$ has at least $4$ more elements than $B$ does
in order to satisfy inequality~(\ref{curvature_formula}).
Since $J$ is connected to the rest of $M$ by a single vertex,
say $v_0$, then every vertex in $\partial J$ except $v_0$ belongs to either $A$ or $B$
and $d_J(v_0)=d_M(v_0)-1 \ge 3$ (note that $d_M(v_0) \ge 4$, since $v_0 \in \partial M$).
So there are at least $2$ adjacent vertices,
say $v_1$ and $v_2$, belonging to $A$.
But then there is a face $D$ in $J$, so in $M$, such that
$\partial D \cap \partial M$ contains three consecutive edges
$e_1$, $e_2$ and $e_3$ such that
$\bar{e}_1 \cap \bar{e}_2=\{v_1\}$ and $\bar{e}_2 \cap \bar{e}_3=\{v_2\}$,
where $d_M(v_i)=d_J(v_i)=2$ for each $i=1, 2$,
contradicting Lemma~\ref{degree2_vertices}(1).
\end{proof}

\begin{proof}{\it of Theorem~\ref{structure} }
Note first that $M$ is a connected annular $[4, 4]$-map by
Corollary~\ref{small_cancellation_condition_2}.
By hypothesis (i), there is no vertex of degree $1$ in $\partial M$.
Moreover, there is no vertex of degree $3$ in $\partial M$,
as is shown in the following.
Suppose there is a vertex $v\in\partial M$ of degree $3$.
Then there are at most two faces which contain $v$.
If there are two such faces, then one of
$\phi(\alpha)$ and $\phi(\delta)$
is not cyclically alternating, a contradiction to hypothesis (ii).
If there is a unique such face, then by using hypothesis (ii),
we see that the boundary label
of the face is not cyclically alternating, a contradiction.
Finally, if there is no such face,
then this contradicts hypothesis (ii).
Hence, no vertex in $\partial M$ has degree $1$ nor $3$, and
so every vertex in $\partial M$ must have degree $2$ or at least $4$.

We argue two cases separately.

\medskip
\noindent {\bf Case 1.} $\sigma \cap \tau= \emptyset$.
\medskip

In this case, (1) follows immediately
from Lemma~\ref{degree2_vertices}(2).

(2) Since $\partial M$ is the disjoint union of
$\sigma$ and $\tau$, Lemma~\ref{inequality} yields
\[
0 \le \sum_{v \in \sigma} (3-d_M(v))+\sum_{v \in \tau} (3-d_M(v)).
\]
On the other hand, since $\sigma$ and $\tau$ are simple
by Lemma~\ref{degree2_vertices}(2)
and since they are disjoint by the current assumption,
every edge in $\sigma$ or $\tau$ is contained in
the boundary of a unique face of $M$.
Thus Lemma~\ref{degree2_vertices}(1) implies that
vertices of degree $2$ do not occur consecutively
on $\sigma$ nor on $\tau$.
Hence, the above inequality holds only when
$d_M(v)=2$ or $d_M(v) = 4$ for every vertex $v \in \sigma \cup \tau$
and when vertices of degree $2$ appear
alternately with vertices of degree $4$
on both $\sigma$ and $\tau$, thus proving (2).

(3)--(4) By (2),
$\sum_{v \in \partial M} (3-d_M(v))=0$.
This together with Lemma~\ref{inequality} yields
\[
0 \le \sum_{v \in M -\partial M} (4-d_M(v))+ \sum_{D \in M} (4-d_M(D)).
\]
Here, since $4-d_M(v) \le 0$ for every $v \in M -\partial M$ and
$4-d_M(D) \le 0$ for every $D \in M$ by the definition of a $[4,4]$-map,
the only possibility is that
$4-d_M(v) = 0$ for every vertex $v \in M -\partial M$ and
$4-d_M(D) = 0$ for every face $D \in M$, thus proving
(3) and (4).

\medskip
\noindent {\bf Case 2.} $\sigma \cap \tau \neq \emptyset$.
\medskip

(1) Suppose on the contrary that $\sigma \cap \tau$ contains an edge.
As illustrated in Figure~\ref{fig.island},
there is a submap $J$ of $M$ such that
\begin{enumerate}[\indent \rm (i)]
\item $J$ is bounded by a simple
closed path of the form $\sigma_1 \tau_1$, where $\sigma_1 \subseteq \sigma$
and $\tau_1 \subseteq \tau$;

\item $J$ is connected to the rest of
$M$ by two distinct vertices, say $v_1$ and $v_2$,
where $\sigma_1 \cap \tau_1=\{v_1, v_2 \}$
and $v_1$ is an endpoint of an edge contained in $\sigma \cap \tau$.
Note that $d_J(v_1)=d_M(v_1)-1 \ge 3$ and
$d_J(v_2)\ge 2$.
\end{enumerate}
Since $J$ is a connected and simply connected $[4,4]$-map,
Lemma~\ref{inequality} yields
\[
4 \le \sum_{v \in \partial J} (3-d_J(v)).
\]
Since $d_J(v_1) \ge 3$ and $d_J(v_2) \ge 2$,
this inequality implies
\[
3 \le \sum_{v \in \partial J-\{v_1,v_2\}} (3-d_J(v)).
\]
On the other hand, since every vertex in $\partial J -\{v_1, v_2\}$
has degree $2$ or at least $4$ and since
degree $2$ vertices cannot occur consecutively
on $\sigma_1-\{v_1, v_2\}$ nor on $\tau_1-\{v_1, v_2\}$
(see Lemma~\ref{degree2_vertices}(1)),
\[
\sum_{v \in \partial J-\{v_1,v_2\}} (3-d_J(v)) \le 2,
\]
a contradiction.

\begin{figure}[h]
\includegraphics{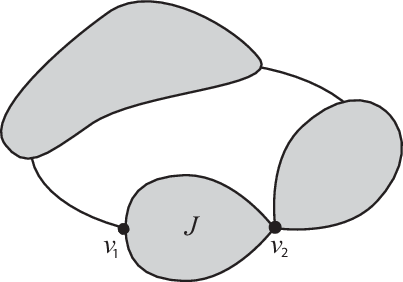}
\caption{
A possible annular map $M$ when $\sigma \cap \tau$ contains an edge.}
\label{fig.island}
\end{figure}

(2)--(4)
By (1), $\sigma \cap \tau$
consists of finitely many vertices in $M$.
Then, by Lemma~\ref{degree2_vertices}(1),
vertices of degree $2$ do not occur consecutively
on $\sigma$ nor on $\tau$.

First suppose that $\sigma \cap \tau$ consists of at least two vertices,
say $v_1, \dots, v_n$, where $n \ge 2$ and where these vertices are indexed
according as there is a submap $J_i$ of $M$ for every
$i=1, \dots, n$ such that
\begin{enumerate}[\indent \rm (i)]
\item $J_i$ is bounded by a simple
closed path of the form $\sigma_i \tau_i$, where $\sigma_i \subseteq \sigma$
and $\tau_i \subseteq \tau$;

\item $J_i$ is connected to the rest of
$M$ by two distinct vertices, say $v_i$ and $v_{i+1}$,
where $\sigma_i \cap \tau_i=\{v_i, v_{i+1} \}$ and where
$d_{J_i}(v_i), d_{J_i}(v_{i+1}) \ge 2$ and
$d_{J_i}(v_{i+1})+d_{J_{i+1}}(v_{i+1})=d_M(v_{i+1})$
(taking the indices modulo $n$).
\end{enumerate}
Then each $J_i$ is a connected and simply connected $[4, 4]$-map
such that
$M=J_1 \cup \cdots \cup J_{n}$.
Moreover
$\sigma=\sigma_1 \cup \cdots \cup \sigma_{n}$
and
$\tau=\tau_1 \cup \cdots \cup \tau_{n}$.
The same argument as for $(M', v_0', v_0'')$ above applies to each
$(J_i, v_i, v_{i+1})$ to prove the assertions.

Next suppose that $\sigma \cap \tau$ consists of a single vertex,
say $v_0$. Cut $M$ open at $v_0$ to get a connected and simply connected
$[4,4]$-map $M'$. In this process, the vertex $v_0$ is separated into two distinct vertices,
say $v_0'$ and $v_0''$, in $M'$ such that $d_{M'}(v_0'), d_{M'}(v_0'') \ge 2$
and $d_{M'}(v_0')+d_{M'}(v_0'')=d_M(v_0)$. Then $M'$ is bounded by a simple
closed path of the form $\sigma_0 \tau_0$,
where $\sigma_0 \cap \tau_0=\{v_0', v_0''\}$.
Again by Lemma~\ref{inequality},
\[
4 \le \sum_{v \in \partial M'} (3-d_{M'}(v)).
\]
Considering that vertices of degree $2$ do not occur consecutively
on $\sigma_0-\{v_0', v_0''\}$ nor on $\tau_0-\{v_0', v_0''\}$
(see Lemma~\ref{degree2_vertices}(1)),
we see that only the equality can hold, and that the equality holds
only when $d_{M'}(v)=2$ or $4$ for every
vertex $v \in \sigma_0 \cup \tau_0 -\{v_0', v_0''\}$, $d_{M'}(v_0')=2=d_{M'}(v_0'')$
and when vertices of degree $2$ appear alternately with vertices of degree $4$
starting and ending with vertices of degree $2$
on both $\sigma_0-\{v_0', v_0''\}$ and $\tau_0-\{v_0', v_0''\}$.
This implies that $d_M(v)=2$ or $4$ for every vertex $v \in \partial M-\{v_0\}$,
$d_{M}(v_0)=4$, and that vertices of degree $2$ appear alternately with vertices of degree $4$
on both $\sigma$ and $\tau$, thus proving (2).

Since $\sum_{v \in \partial M'} (3-d_{M'}(v))=4$, Lemma~\ref{inequality} yields
\[
0 \le \sum_{v \in {M'} -\partial {M'}} (4-d_{M'}(v))+ \sum_{D \in M'} (4-d_{M'}(D)).
\]
Here, by the definition of a $[4,4]$-map, $4-d_{M'}(v) \le 0$
for every vertex $v \in M' -\partial M'$ and
$4-d_{M'}(D) \le 0$ for every face $D \in M'$, we must have
$d_M(v)=d_{M'}(v) = 4$ for every vertex $v \in M' -\partial M'$ and
$d_M(D) =d_{M'}(D)= 4$ for every face $D \in M'$, thus proving
(3) and (4).
\end{proof}

Theorem~\ref{structure} enables us to identify all possible shapes of
the annular maps $M$.
To describe the result,
we define the {\it outer boundary layer} of
an annular map $M$ to be the submap of $M$
consisting of all faces $D$
such that the intersection of $\partial D$ with the outer boundary of $M$ contains an edge,
together with the edges and vertices contained in $\partial D$.

\begin{theorem}
\label{cor:structure}
Let $M$ be a reduced annular $[4,4]$-map
satisfying the assumptions of Theorem~\ref{structure}.
Then Figure~\ref{layer}{\rm (}a{\rm )} illustrates the only possible type of
the outer boundary layer of $M$,
while Figure~\ref{layer}{\rm (}b{\rm )} illustrates
the only possible type of whole $M$.
{\rm (}All layers have the same number of faces,
but the number may vary, as may the number of layers.{\rm )}
\end{theorem}

\begin{figure}[h]
\includegraphics{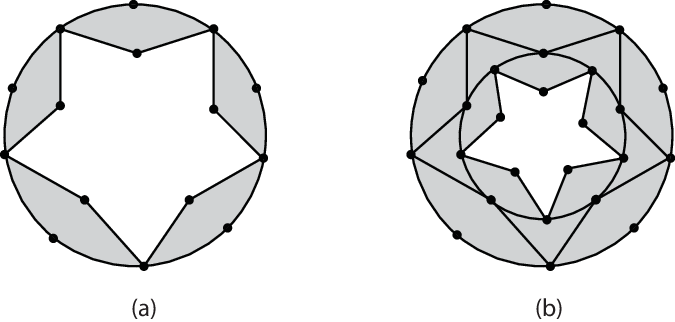}
\caption{}
\label{layer}
\end{figure}

\begin{proof}
Let $v_1, v_2, \dots, v_{2n}$ be the vertices
of the outer boundary $\sigma$ arranged in this cyclic order in $\sigma$,
such that $d_M(v_i)$ is $4$ or $2$
according to whether $i$ is odd or even
(see Theorem~\ref{structure}(1) and (2)).
Let $e_i$ ($1\le i \le 2n$) be the oriented edge in $\sigma$ running from $v_i$
to $v_{i+1}$,
where the indices are taken modulo $2n$.
Then, for each $j$ ($1\le j\le n$),
there is a unique face, $D_j$, of $M$
whose boundary contains the vertices
$v_{2j-1}$, $v_{2j}$ and $v_{2j+1}$
and the edges $e_{2j-1}$ and $e_{2j}$.
Since $d_M(D_j)=4$ by Theorem~\ref{structure}(4),
there are oriented edges $e_{2j-1}'$ and $e_{2j}'$ and of $M$ such that
$e_{2j-1}, e_{2j}, e_{2j}'^{-1}, e_{2j-1}'^{-1}$
is a boundary cycle of $D_j$.
The terminal point of $e_{2j-1}'$ is equal to the initial point of $e_{2j}'$,
and we denote it by $v_{2j}'$ (see Figure~\ref{2-cell}).

\begin{figure}[h]
\includegraphics{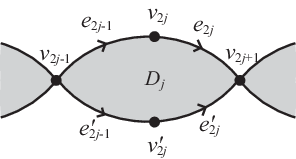}
\caption{\label{2-cell}}
\end{figure}

\medskip
\noindent{\bf Claim 1.}
{\it
The unoriented edges determined by
$e_i'$ and $e_{i+1}'$ are distinct for every $1\le i \le 2n$,
where indices are taken modulo $2n$.
}

\begin{proof}{\it of Claim~1 }
Suppose this is not the case.
Then, since $M$ is planar, we have $e_i'=e_{i+1}'^{-1}$ for some $i$.
Suppose first that $e_{2j}'=e_{2j+1}'^{-1}$ for some $j$.
Then the union of the closures $\bar D_j$ and $\bar D_{j+1}$ forms
a neighborhood of $v_{2j+1}$ in $M$ and hence,
$e_{2j}, e_{2j+1}^{-1}$ and
$e_{2j}'=e_{2j+1}'^{-1}$ are the only oriented edges
of $M$ having $v_{2j+1}$ as the terminal point.
So $d_M(v_{2j+1})=3$, a contradiction to Theorem~\ref{structure}(2).
Next, suppose $e_{2j-1}'=e_{2j}'^{-1}$ for some $j$.
Then $v_{2j}'$ is contained in the interior of the closure $\bar D_j$, and
$e_{2j-1}'=e_{2j}'^{-1}$ is the only
oriented edge of $M$ having $v_{2j}'$ as the terminal point.
So $d_M(v_{2j}')=1$, a contradiction to Theorem~\ref{structure}(3).
\end{proof}

\medskip
\noindent{\bf Claim 2.}
{\it
For every $1\le j \le n$, the oriented edges
$e_{2j}, e_{2j+1}^{-1}, e_{2j}',e_{2j+1}'^{-1}$
are mutually distinct, and they are the only oriented edges of $M$
which have $v_{2j+1}$ as the terminal point.
}

\begin{proof}{\it of Claim~2 }
Let $U$ be a small regular neighborhood of $v_{2j+1}$ relative to $M$,
namely, $U$ is a closed disk in $\RR^2$
containing $v_{2j+1}$ in its interior,
such that $|M^{(1)}|\cap U$ consists of $4$ ($=d_M(v_{2j+1})$)
arcs joining $v_{2j+1}$ with $\partial U$,
each pair of which intersect only at $v_{2j+1}$.
Here $|M^{(1)}|$ denotes the $1$-skeleton of $M$,
i.e., the union of the vertices and edges of $M$.
Consider the punctured disk $\check U:=U-\{v_{2j+1}\}$.
Then the subsets $(e_{2j}\cup e_{2j+1})\cap \check U$
and $(e_{2j}'\cup e_{2j+1}')\cap \check U$ of $\check U$ are separated
in $\check U$ by the union of open $2$-cells $D_j\cup D_{j+1}$.
Hence each of the oriented edges $e_{2j}'$ and $e_{2j+1}'^{-1}$ cannot be identical with
any of the oriented edges $e_{2j}$ and $e_{2j+1}^{-1}$.
Since $e_{2j}\ne e_{2j+1}^{-1}$ by Theorem~\ref{structure}(1) and
$e_{2j}'\ne e_{2j+1}'^{-1}$ by Claim~1,
the oriented edges
$e_{2j}, e_{2j+1}^{-1}, e_{2j}',e_{2j+1}'^{-1}$
are mutually distinct.
Since $d_M(v_{2j+1})=4$, we obtain the claim.
\end{proof}

\medskip
\noindent{\bf Claim 3.}
{\it
$e_i'$ is not contained in $\sigma$ for every $1\le i \le 2n$.
}

\begin{proof}{\it of Claim~3 }
We prove the claim when $i=2j$ for some integer $j$.
(The remaining case can be treated by a parallel argument.)
Suppose $e_{2j}'$ is contained in $\sigma$.
Then, since $v_{2j+1}$ is the terminal point of $e_{2j}'$,
the oriented edge $e_{2j}'$ is equal to $e_{2j}$
or $e_{2j+1}^{-1}$.
But, this is impossible by Claim~2.
\end{proof}

\medskip
\noindent{\bf Claim 4.}
{\it
$v_{2j}'$ is not contained in $\sigma$ for every $1\le j \le n$.
}

\begin{proof}{\it of Claim~4 }
Suppose $v_{2j}'$ is contained in $\sigma$ for some $j$.
Then $v_{2j}'=v_k$ for some $1 \le k \le 2n$,
and the oriented edges $e_{k-1}, e_k^{-1}, e_{2j-1}',e_{2j}'^{-1}$
have $v_{2j}'=v_k$ as the terminal point.
Since these are mutually distinct by Claims~1 and 3,
we have $d_M(v_k)\ge 4$ and hence $k=2h+1$ for some integer $h$.
If $h=j$, then $e_{2j}'$ is a loop based on $v_{2j}'=v_{2j+1}$
and hence the oriented edge $e_{2j}'^{-1}$ also
has $v_{2j+1}$ as the terminal point,
a contradiction to Claim~2.
Similarly, we see $h=j-1$ cannot happen.
Hence we have $v_{2j}'=v_{2h+1}$,
where $h \neq j, j-1$.

Note that in addition to
$e_{2h}, e_{2h+1}^{-1}, e_{2h}', e_{2h+1}'^{-1}$,
the oriented edges $e_{2j-1}'$ and $e_{2j}'^{-1}$
have $v_{2j}'=v_{2h+1}$ as the terminal point.
By Claims~1 and 3,
this implies that
the pair $(e_{2j-1}', e_{2j}'^{-1})$ is equal to
$(e_{2h}', e_{2h+1}'^{-1})$ or $(e_{2h+1}'^{-1},e_{2h}')$.
Since $M$ is planar, $(e_{2j-1}', e_{2j}'^{-1})$ cannot be equal to
$(e_{2h}', e_{2h+1}'^{-1})$.
So we may assume
$(e_{2j-1}', e_{2j}'^{-1})=(e_{2h+1}'^{-1}, e_{2h}')$.
Then the initial point $v_{2j+1}$ of $e_{2j}'^{-1}$
is equal to the initial point $v_{2h}'$ of $e_{2h}'$.
Similarly,
we have $v_{2j-1}=v_{2h+2}'$.
Thus we have shown that the identity
$v_{2j}'=v_{2h+1}$
implies the identities
$v_{2h}'=v_{2j+1}$ and $v_{2h+2}'=v_{2j-1}$.
By repeatedly applying this fact,
we see that,
for every integer $k$,
$v_{2(h-k)}'=v_{2(j+k)+1}$ and $v_{2(j+k)}'=v_{2(h-k)+1}$.
Thus we can find a pair of integers $j^*$ and $h^*$
such that $v_{2j^*}'=v_{2h^*+1}$ and $h^*=j^*$ or $j^*-1$.
However, this is impossible by the argument
in the first paragraph of this proof.
\end{proof}

\medskip
\noindent{\bf Claim 5.}
{\it
The unoriented edges determined by
$e_i'$ $(1\le i \le 2n)$ are mutually distinct.
}

\begin{proof}{\it of Claim~5 }
Suppose this is not the case.
Then, since $M$ is planar,
we have $e_i'=e_k'^{-1}$ for some $1 \le i < k \le 2n$.
We assume $i=2j$ for some integer $j$.
(The other case is treated by parallel arguments.)
If $k=2h$ for some integer $h$,
then $e_{2j}'=e_{2h}'^{-1}$, and hence
the initial point $v_{2j}'$ of $e_{2j}'$
is equal to the initial point $v_{2h+1}$ of $e_{2h}'^{-1}$.
This contradicts Claim~4.
If $k=2h+1$ for some integer $h$,
then $e_{2j}'=e_{2h+1}'^{-1}$, and hence
the terminal point $v_{2j+1}$ of $e_{2j}'$
is equal to the terminal point $v_{2h+1}$ of $e_{2h+1}'^{-1}$.
Since $\sigma$ is simple, this implies $j=h$
and hence $e_{2j}'=e_{2j+1}'^{-1}$.
This contradicts Claim~1.
\end{proof}

\medskip
\noindent{\bf Claim 6.}
{\it
The vertices $v_{2j}'$ $(1\le j\le n)$ are mutually distinct.
}

\begin{proof}{\it of Claim~6 }
Suppose $v_{2j}'= v_{2h}'$ for some $1 \le j \neq h \le n$.
Then $e_{2j-1}'$, $e_{2j}'^{-1}$, $e_{2h-1}'$, $e_{2h}'^{-1}$
have $v_{2j}'= v_{2h}'$ as the terminal point.
Since these oriented edges are mutually distinct by Claim~5
and since $d_M(v_{2j}')=2$ or $4$ by Theorem~\ref{structure}(2),(3),
we see that $d_M(v_{2j}')=4$
and that these are the only oriented edges having
$v_{2j}'= v_{2h}'$ as the terminal point.
We can choose $j$ and $h$,
so that they are {\it outermost} in the sense that
$1 \le j < h \le n$ (after a cyclic permutation of indices) and
that the vertices $v_{2j}'= v_{2h}'$ and $v_{2k}'$ ($j<k<h$)
are mutually distinct.
Then $\cup_{2j \le i \le 2h-1} \bar{e}_i'$ is a simple loop.
We show that $e_{2j}'$, $e_{2h-1}'$ are contained in
the boundary of the annular diagram $M$.
Suppose this is not the case.
Then, by the above observations,
there is a face, $D$, of $M$ whose
boundary contains $e_{2j}'$, $e_{2h-1}'$.
By Claim~2, the edges $e_{2j+1}'$ and $e_{2h-2}'$
must be contained in $\partial D$.
Since $d_M(D)=4$ by Theorem~\ref{structure}(4),
a boundary cycle of $D$ is given by
$e_{2j}'$, $e_{2j+1}'$, $e_{2h-2}'$, $e_{2h-1}'$.
So, $h=j+2$ and $v_{2j+2}'$ is
contained in the interior of $\bar D \cup \bar D_{j+1}$.
This implies that $v_{2j+2}'$ is an inner vertex and
$d_M(v_{2j+2}')=2$, a contradiction to Theorem~\ref{structure}(3).
Hence $e_{2j}', e_{2h-1}'$ are contained in
the boundary of the annular diagram $M$,
and in particular, $v_{2h-1}, v_{2h}'=v_{2j}', v_{2j+1}$
lie in the inner boundary of $M$ successively.
However all of them have degree $4$.
This contradicts Theorem~\ref{structure}(2).
\end{proof}

By Claims~5 and 6, we see that
$\sigma':=\cup_{i=1}^{2n} \bar{e}_i'$ is a simple loop and
that $\sigma\cap\sigma'=\{v_{2j+1} \ | \ 0\le j\le n-1\}$.
Thus the outer boundary layer, $J$, of $M$
is as illustrated in Figure~\ref{layer}(a).
If $J=M$, we obtain the conclusion of Theorem~\ref{cor:structure}.
So, in the following, we assume $J\subsetneqq M$.
Let $M'$ be the map obtained from $M$
by collapsing the outer boundary layer $J$ onto $\sigma'$.
Then $M'$ is an annular map
whose outer and inner boundaries are
the simple loops $\sigma'$ and $\tau$, respectively.

\medskip
\noindent{\bf Claim 7.}
{\it
The annular map $M'$ satisfies the conclusions
of Theorem~\ref{structure}.
Moreover, for each $1\le j\le n$,
there is a face $D'_j$ of $M'$
such that $e_{2j}'$ and $e'_{2j+1}$ are the only edges of $\sigma'$
contained in $\partial D_j'$.
}

\begin{proof}{\it of Claim~7 }
Since $J\subsetneqq M$,
we may assume, after cyclic permutation of indices, that
there is a face $D_1'$ of $M'$
whose boundary contains either $e_2'$ or $e_3'$.
Then $\partial D_1'$ contains both $e_2'$ and $e_3'$,
because $d_M(v_3)=4$.
By using the fact that $d_M(v_1)=d_M(v_5)=4$,
we see that $e_2'$ and $e_3'$ are the only edges of $\sigma'$
contained in $\partial D_1'$.
Since $d_M(D_1')=4$,
there are oriented edges $e_2''$ and $e_3''$ of $M$
such that a boundary cycle of $D_1'$ is
$e_2', e_3', e_3''^{-1}, e_2''^{-1}$.
This implies that $d_M(v_4')\ge 3$ and so $d_M(v_4')= 4$.
Thus there is an oriented edge, $e_4''$, emanating from $v_4'$
other than $e_3'^{-1}, e_3''^{-1}, e_4'$.
Since $d_M(v_4')=d_M(v_5)=4$,
we see by Theorem~\ref{structure}(2) that
the edge $e_4'$ is not contained in the boundary of $M$.
Hence, there is a face, $D_2'$, of $M$
whose boundary contains both $e_4'$ and $e_4''$.
We see, by repeating the preceding argument for $D_1'$,
that $e_5'$ is also contained in $\partial D_2'$
and that $e_4'$ and $e_5'$ are
the only edges of $\sigma'$
contained in $\partial D_2'$.
Here, observe that $D_1'=D_2'$ if $n=1$.
By repeating this argument,
we see that, for each $1\le j\le n$,
there is a face $D'_j$ of $M'$
such that $e_{2j}'$ and $e'_{2j+1}$ are the only edges of $\sigma'$
contained in $\partial D_j'$.
Hence the outer boundary $\sigma'$
and the inner boundary $\tau$ of $M'$ are disjoint.
So $M'$ satisfies the conclusion (1) of Theorem~\ref{structure}.
The above fact also implies
$d_{M'}(v_{2j}')=4$ and $d_{M'}(v_{2j+1})=2$ for every $j$.
Thus the outer boundary $\sigma'$ of $M'$ satisfies the
conclusion (2).
Since $d_{M'}(v)=d_M(v)$ for any vertex of $M'$ which is not contained in $\sigma$
and since $\tau$ is disjoint from $\sigma$,
the inner boundary $\tau$ of $M'$ also satisfies the conclusion (2)
and $M'$ satisfies the conclusion (3).
It is obvious that $M'$ satisfies the conclusion (4).
\end{proof}

Note that the arguments preceding Claim~7
use only the conclusion of Theorem~\ref{structure}.
Thus we can repeat the arguments and see that
the outer boundary layer, $J'$, of $M'$ also satisfies the conclusion
of Theorem~\ref{cor:structure}.
If $J\cup J'=M$, then we see from Claim~7 that
$M$ is as described in Theorem~\ref{cor:structure}.
If $J\cup J'\subsetneqq M$, we can repeat these arguments,
and obtain Theorem~\ref{cor:structure}.
\end{proof}

\section{Proof of the only if part of Main Theorem~\ref{main_theorem}}
\label{sec:only_if_part}

We first note that the special case where $p=2$ can be settled very easily
as follows. In this case,
$K(1/2)$ is the Hopf link, and
$G(K(1/2))=\langle a,b \ | \ u_{1/2} \rangle$
with $u_{1/2}=aba^{-1}b^{-1}$ is the rank $2$ free abelian group
with basis $\{a, b\}$.
On the other hand, $I_1(1/2) \cup I_2(1/2)=\{0,1\}$, and
it is obvious that $u_0=ab$ is not conjugate to $u_1^{\pm 1}=(ab^{-1})^{\pm 1}$
in $G(K(1/2))$, so that $\alpha_0$ is not homotopic to $\alpha_1$ in $S^3-K(1/2)$.
This implies that the unoriented simple loops $\alpha_s$
fall into precisely three homotopy classes
(represented by $\alpha_0$, $\alpha_1$ and $\alpha_{\infty}$, the last one trivial).
Hence the only if part of Main Theorem~\ref{main_theorem} is valid
when $p=2$.

\begin{proof}{\it of the only if part of Main Theorem~\ref{main_theorem} }
As noted in the beginning of this section,
we may assume that $p$ is a positive integer with $p \ge 3$.
For two distinct elements $s, s' \in I_1(1/p) \cup I_2(1/p)=\{0\} \cup [{1 \over {p-1}}, 1]$,
suppose that the unoriented loops
$\alpha_s$ and $\alpha_{s'}$ are homotopic in $S^3-K(1/p)$.
Then $u_s$ and $u_{s'}^{\pm 1}$ are conjugate in $G(K(1/p))$.
Let $R$ be the symmetrized subset of $F(a, b)$ generated
by the single relator $u_{1/p}$ of the upper presentation of $G(K(1/p))$.
Due to Lemma~\ref{lyndon_schupp},
there is a reduced nontrivial annular $R$-diagram $M$ such that
$u_s$ and $u_{s'}^{\pm 1}$ are, respectively, outer and inner
boundary labels of $M$.
Then we see that $M$ satisfies
the three hypotheses (i), (ii) and (iii) of Theorem~\ref{structure}.
In fact, hypothesis (iii) follows from
Proposition~\ref{connection}(1)
and Remark~\ref{rem:extreme_case}.

\begin{lemma}
\label{Outer/inner_cycles}
Suppose that $\lp b_1, b_2, \dots, b_n \rp$ denotes
the cyclic $S$-sequence of an outer boundary label of $M$.
Then $b_i<p$ and the cyclic $S$-sequence of an inner boundary label of $M$
is $\lp p-b_1, p-b_2, \dots, p-b_n \rp$.
\end{lemma}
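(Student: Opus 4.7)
The plan is to exploit the very rigid structure of $M$ given by Corollary~\ref{cor:structure}: $M$ decomposes as a concentric union of rings of $4$-gons, each ring as in Figure~\ref{layer}(a). I will analyze the outermost ring in detail and then iterate. The inequality $b_i<p$ is immediate: by hypothesis the outer boundary label is $u_s$ for some $s\in I_1(1/p)\cup I_2(1/p)$, and Lemma~\ref{connection}(2) forces every term of $CS(s)$ to be strictly less than $p$; the same bound holds for the inner boundary label $u_{s'}^{\pm 1}$.

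Fix a face $D_j$ in the outermost ring with corners $v_{2j-1},v_{2j},v_{2j+1},v_{2j}'$ and edges $e_{2j-1},e_{2j},e_{2j-1}',e_{2j}'$, as in the proof of Corollary~\ref{cor:structure}. Its boundary cycle carries a cyclic permutation of $u_{1/p}^{\pm 1}$, so has $S$-sequence $(p,p)$ and exactly two sign-changes. By Lemma~\ref{max_piece}, every piece of $u_{1/p}$ is an all-same-sign word of length at most $p-1$, so each of the four edge-labels on $\partial D_j$ is all-same-sign; hence the two sign-changes of $\partial D_j$ must sit at corner vertices. Enumerating placements and enforcing the run-length $(p,p)$ constraint, only two configurations survive the piece-length bound $|\phi(e)|\le p-1$: either both sign-changes lie at the degree-$4$ corners $v_{2j-1},v_{2j+1}$, or at the degree-$2$ corners $v_{2j},v_{2j}'$. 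I would rule out the first: in that case the outer arc $\phi(e_{2j-1})\phi(e_{2j})$ is all-same-sign of length exactly $p$, which produces a term $\ge p$ in the outer cyclic $S$-sequence, contradicting $b_i<p$. Hence every $D_j$ is in the second configuration, from which one reads off the sign identities $\epsilon_i'=-\epsilon_i$ (where $\epsilon_i,\epsilon_i'$ denote the common sign of the letters of $\phi(e_i),\phi(e_i')$) for $i\in\{2j-1,2j\}$, together with the length identities $|\phi(e_{2j-1})|+|\phi(e_{2j-1}')|=|\phi(e_{2j})|+|\phi(e_{2j}')|=p$.

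These identities say that the sign pattern along the inner boundary of the outer ring is the pointwise negation of the outer sign pattern, so a vertex carries a sign-change on the inner if and only if it does on the outer; thus the two cyclic $S$-sequences have the same length $n$ with runs in canonical cyclic correspondence. The last step is to show each run consists of a single edge. If some outer run were carried by two consecutive edges $e_{2j},e_{2j+1}$ of total length $L$ (forced to come from adjacent faces meeting at $v_{2j+1}$ with no sign-change), then the corresponding inner run would be carried by $e_{2j}',e_{2j+1}'$ of total length $(p-|\phi(e_{2j})|)+(p-|\phi(e_{2j+1})|)=2p-L$; but $L<p$ on the outer side and $2p-L<p$ on the inner side together force $L<p$ and $L>p$, a contradiction. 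Hence every run is a single edge, the inner run length is exactly $p-b_i$, and the cyclic $S$-sequence of the inner boundary of the outer ring is $\lp p-b_1,\dots,p-b_n\rp$. If $M$ has more layers, peel off the outer ring to get a smaller reduced annular $R$-diagram satisfying the same hypotheses and iterate; a parity check using $s\ne s'$ (the map $b\mapsto p-b$ is an involution on $\{1,\dots,p-1\}$) forces the number of peeled layers to be odd, delivering the claimed identity for the inner boundary of $M$. The main obstacle is the no-$2$-edge-run step, where one must simultaneously apply $b_i<p$ on both the outer and the inner boundaries.
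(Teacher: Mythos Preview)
Your approach is essentially the paper's: analyze the outermost ring using the piece structure of $u_{1/p}$ to pin down the sign-change pattern on each face, then show that adjacent faces also have a sign change at their shared degree-$4$ vertex, and finally iterate over layers. Your enumeration of the two possible sign-change configurations on a face and the elimination of the ``degree-$4$-corners'' case via the outer bound $b_i<p$ is exactly the content of the paper's Claim~1, and your Claim~2 (the no-$2$-edge-run step) and the layer-by-layer parity argument match the paper's Claim~2 and its concluding paragraph.

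There is, however, one genuine gap, at precisely the point you flag as the main obstacle. In the no-$2$-edge-run step you invoke the inequality $2p-L<p$ ``on the inner side,'' citing the bound for $u_{s'}$. But the inner side of the outermost ring $J$ coincides with the inner boundary of $M$ only when $J=M$. When $M$ has more than one layer, the inner boundary of $J$ is an \emph{intermediate} circle, and no a~priori bound on its cyclic $S$-sequence is available; your iteration scheme (``peel off and repeat with the same hypotheses'') is circular here, because verifying the hypotheses for the peeled diagram already requires the no-$2$-edge-run conclusion for $J$. The paper closes this gap by a case split: if $J\subsetneq M$, the two inner edges $e_{2j}',e_{2j+1}'$ of $J$ meeting at $v_{2j+1}$ lie in the boundary of a single face $D'$ of the next layer (since $v_{2j+1}$ has degree~$2$ in the closure of $M\setminus J$), and the boundary label of $D'$ is a cyclic permutation of $u_{1/p}^{\pm1}$ with cyclic $S$-sequence $\lp p,p\rp$; hence it cannot contain a same-sign subword of length $2p-L>p$. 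Once you insert this observation, your argument and the paper's coincide.
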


We complete the proof of
the only if part of Main Theorem~\ref{main_theorem},
by assuming the above lemma.
Note that the cyclic $S$-sequences of arbitrary outer
and inner boundary labels of $M$
are
$CS(u_s)$ and $CS(u_{s'}^{\pm 1})=CS(u_{s'})$,
respectively.
Putting $CS(u_s)=\lp b_1, b_2, \dots, b_n \rp$,
Lemma~\ref{Outer/inner_cycles} implies
that $CS(u_{s'})=\lp p-b_1, p-b_2, \dots, p-b_n \rp$.
In particular, $CS(u_s)$ and $CS(u_{s'})$ have the same length.
Since the length of $CS(u_s)$ is even or $1$
according as $s \neq 0$ or $s=0$
(see Lemma~\ref{j-term}(1) and Remark~\ref{remark:j-term})
and since $s$ and $s'$ are distinct,
we see that $s$ nor $s'$ is $0$.
Hence we may
write $s=q_1/p_1$ and $s'=q_2/p_2$, where $(p_i, q_i)$ is a pair of
relatively prime positive integers.
Then by
Lemma~\ref{j-term}(1),
we have $n=2q_1=2q_2$, so $q_1=q_2$.
Also by
Lemma~\ref{j-term}(1),
\[
\sum_{i=1}^{q_1} b_i=\sum_{i=1}^{q_1} \nu_i(s)
= \lfloor p_1 \rfloor_* - \lfloor 0 \rfloor_*=p_1,
\]
where $\nu_i(s)$ denotes the $i$-th term of $S(u_s)$,
and similarly
\[
\sum_{i=1}^{q_1} (p-b_i)=\sum_{i=1}^{q_1} \nu_i(s')
=\lfloor p_2 \rfloor_* - \lfloor 0 \rfloor_*
=p_2,
\]
where $\nu_i(s')$ denotes the $i$-th term of $S(u_{s'})$.
Hence $p_2=\sum_{i=1}^{q_1} (p-b_i)=pq_1-p_1$,
which implies that $q_1/(p_1+p_2)=1/p$, as required.
\end{proof}

It remains to prove Lemma~\ref{Outer/inner_cycles}.

\begin{proof}{\it of Lemma~\ref{Outer/inner_cycles} }
Let $J$ be the outer boundary layer of $M$.
Due to
Theorem~\ref{cor:structure},
$J$ is as depicted in Figure~\ref{layer}(a).
Let $\alpha$ and $\delta$ be, respectively, the outer and
inner boundary cycles of $J$ starting from $v_0$,
where $v_0$ is a vertex lying in both
the outer and inner boundaries of $J$.
Here, recall from Convention~\ref{convention2} that
$\alpha$ is read clockwise and $\delta$ is read counterclockwise.
Let $\alpha=e_1, e_2, \dots, e_{2m}$ and
$\delta^{-1}=e_1', e_2', \dots, e_{2m}'$ be the
decompositions into oriented edges in $\partial J$.
Then clearly
for each $j=1, \dots, m$,
there is a face $D_j$ of $J$ such that
$e_{2j-1}, e_{2j}, e_{2j}'^{-1}, e_{2j-1}'^{-1}$ are
consecutive edges in a boundary cycle of $D_j$.

\medskip
\noindent{\bf Claim 1.}
{\it
For each $2$-cell $D_j$,
there are decompositions
$v_2\equiv v_{2b}v_{2e}$ and $v_4\equiv v_{4b}v_{4e}$
such that the following hold,
where $u_{1/p} \equiv v_2v_4$ as in Lemma~\ref{max_piece},
and $v_{ib}$ and $v_{ie}$ are nonempty initial and terminal subwords of $v_i$.
\begin{enumerate}[\indent \rm (1)]
\item
If $(\phi(\partial D_j)) \equiv (u_{1/p})$, then
$(\phi(e_{2j-1}), \phi(e_{2j}), \phi(e_{2j}'^{-1}), \phi(e_{2j-1}'^{-1}))$
is equal to
$(v_{2e},v_{4b},v_{4e},v_{2b})$ or
$(v_{4e},v_{2b},v_{2e},v_{4b})$.
\item
If $(\phi(\partial D_j)) \equiv (u_{1/p}^{-1})$, then
$(\phi(e_{2j}^{-1}), \phi(e_{2j-1}^{-1}), \phi(e_{2j-1}'), \phi(e_{2j}'))$
is equal to
$(v_{2e},v_{4b},v_{4e},v_{2b})$ or
$(v_{4e},v_{2b},v_{2e},v_{4b})$.
\end{enumerate}
}

\begin{proof}{\it of Claim~1 }
We treat the case where $(\phi(\partial D_j)) \equiv (u_{1/p})$.
(The other case can be treated similarly.)
Since $\phi(e_{2j-1})\phi(e_{2j})$ is not a piece by
Convention~\ref{convention},
it contains a maximal $1$-piece as a proper initial subword, $v$.
Assume that $v$ is $v_{2b}^*$ (resp., $v_{4b}^*$)
(see Lemma~\ref{max_piece}(1a)).
Since $\phi(e_{2j-1})\phi(e_{2j})$ is contained in a
maximal $2$-piece,
this assumption together with Lemma~\ref{max_piece}(1b)
implies that $\phi(e_{2j-1})\phi(e_{2j})$ is $v_2$ or $v_4$, and hence
$S(\phi(e_{2j-1})\phi(e_{2j}))=(p)$.
But, since $s \in I_1(1/p) \cup I_2(1/p)=\{0\} \cup [{1 \over {p-1}}, 1]$,
every term of
the cyclic word $(u_s)=(\phi(\alpha))$ is at most $p-1$
by Proposition~\ref{connection}(2) and by Remark~\ref{rem:extreme_case}.
This implies that
the cyclic word $(u_s)=(\phi(\alpha))$ cannot contain a subword
whose $S$-sequence is $(p)$, a contradiction.
Hence,
the word $v$ is equal to $v_{2e}$ or $v_{4e}$
by Lemma~\ref{max_piece}(1a).
This together with Lemma~\ref{max_piece} implies that
$(\phi(e_{2j-1}),\phi(e_{2j}))=(v_{2e},v_{4b})$ or
$(v_{4e},v_{2b})$ accordingly.
By applying a parallel argument to
$(\phi(e_{2j}'^{-1}), \phi(e_{2j-1}'^{-1}))$ and
by using the fact that
the cyclic word $(\phi(\partial D_j))$ is equal to $(u_{1/p})$,
we obtain the desired result.
\end{proof}

By Claim~1, there are positive integers $\ell_i$ and $\ell_i'$,
for each $1\le i\le 2m$,
such that $S(\phi(e_i))=(\ell_i)$ and $S(\phi(e_i'))=(\ell_i')$
and such that $\ell_i+\ell_i'=p$. Furthermore, they satisfy the following.

\medskip
\noindent{\bf Claim 2.}
{\it
For each $1\le j\le m$, the following hold.
\begin{enumerate}[\indent \rm (1)]
\item
$S(\phi(e_{2j-1})\phi(e_{2j}))=(\ell_{2j-1},\ell_{2j})$
and
$S(\phi(e_{2j-1}')\phi(e_{2j}'))=(\ell_{2j-1}',\ell_{2j}')$.
\item
$S(\phi(e_{2j})\phi(e_{2j+1}))=(\ell_{2j},\ell_{2j+1})$
and
$S(\phi(e_{2j}')\phi(e_{2j+1}'))=(\ell_{2j}',\ell_{2j+1}')$,
where the indices are taken modulo $2m$.
\end{enumerate}
}

\begin{proof}{\it of Claim~2 }
The first assertion immediately follows from Claim~1.
To see the second assertion,
note that Claim~1 implies that
if $\phi(e_{2j-1})$ is a positive (resp., negative) word
then $\phi(e_{2j}')$ is also a positive (resp., negative) word
and $\phi(e_{2j-1}')$ and $\phi(e_{2j})$ are
negative (resp., positive) words.
In particular, if $\phi(e_i)$ is a positive (resp., negative) word
then $\phi(e_i')$ is a negative (resp., positive) word,
i.e., $\phi(e_i)$ and $\phi(e_i')$ always have ``opposite signs''.
Now consider the $S$-sequence of $\phi(e_{2j})\phi(e_{2j+1})$.
Clearly $S(\phi(e_{2j})\phi(e_{2j+1}))$
is either $(\ell_{2j}+\ell_{2j+1})$ or $(\ell_{2j}, \ell_{2j+1})$.
We will show that only the latter is possible.

Suppose on the contrary that $S(\phi(e_{2j})\phi(e_{2j+1}))$
is $(\ell_{2j}+\ell_{2j+1})$ for some $j$.
Then $\phi(e_{2j})$ and $\phi(e_{2j+1})$ share the same sign
and hence $\phi(e_{2j}')$ and $\phi(e_{2j+1}')$ share the same sign
by the above observation.
So $S(\phi(e_{2j}')\phi(e_{2j+1}'))$ is $(\ell_{2j}'+\ell_{2j+1}')$.
By Proposition~\ref{connection}(2)
and Remark~\ref{rem:extreme_case},
$(u_s)=(\phi(\alpha))$ cannot contain a subword
whose $S$-sequence is $(p)$. So $\ell_{2j}+\ell_{2j+1} < p$.
But then $S(\phi(e_{2j}')\phi(e_{2j+1}'))$ is
$(\ell_{2j}'+\ell_{2j+1}')=(2p-\ell_{2j}-\ell_{2j+1})$ with
$2p-\ell_{2j}-\ell_{2j+1} > p$.
If $J=M$, this contradicts
the assumption that $s' \in I_1(1/p) \cup I_2(1/p)=\{0\} \cup [{1 \over {p-1}}, 1]$,
because every term of $CS(s')=CS(u_{s'})$ has to be at most $p-1$
again by Proposition~\ref{connection}(2)
and Remark~\ref{rem:extreme_case}.
Also if $J \subsetneq M$, then, as depicted in Figure~\ref{layer}(b),
$e_{2j}'$ and $e_{2j+1}'$ are two consecutive edges in
$\partial D_j' \cap \delta^{-1}$
(recall that $\delta$ is the inner boundary cycle of $J$ starting from $v_0$ and is read counterclockwise)
for some face $D_j'$ in $M-J$, but then $(u_{1/p})$ contains a subword whose $S$-sequence is
$(2p-\ell_{2j}-\ell_{2j+1})$ with $2p-\ell_{2j}-\ell_{2j+1} > p$, contradicting
$CS(u_{1/p})=\lp p,p \rp$.
\end{proof}

Therefore $CS(\phi(\alpha))$ becomes
$\lp \ell_{1}, \ell_{2}, \dots, \ell_{2m-1}, \ell_{2m} \rp$,
and $CS(\phi(\delta^{-1}))$ becomes
$\lp p-\ell_{1}, p-\ell_{2}, \dots, p-\ell_{2m-1}, p-\ell_{2m} \rp$.
If $M=J$, the proof of Lemma~\ref{Outer/inner_cycles} is completed.
Now assume $J \subsetneq M$. Let $J_1$ denote the outer boundary layer of
$M-int (J)$. Then the cyclic $S$-sequence of an arbitrary inner boundary label
of $J_1$ is $\lp \ell_{1}, \ell_{2}, \dots, \ell_{2m-1}, \ell_{2m} \rp$.
If $M=J \cup J_1$, then $s=s'$, a contradiction.
Hence we must have $J \cup J_1 \subsetneq M$.
Let $J_2$ denote the outer boundary layer of $M-int(J \cup J_1)$.
Then the cyclic $S$-sequence of an arbitrary inner boundary label
of $J_2$ is $\lp p-\ell_{1}, p-\ell_{2}, \dots, p-\ell_{2m-1}, p-\ell_{2m} \rp$.
If $M=J \cup J_1 \cup J_2$, the proof of
Lemma~\ref{Outer/inner_cycles} is completed.
By the repetition of this argument, we are done.
\end{proof}

\section*{Acknowledgement}
The authors would like to thank Koji Fujiwara
for his interest in this work and for inspiring discussion.
They would also like to thank the referee for very careful reading
and valuable comments and suggestions,
especially on the proof of the if part of the main theorem.
Their thanks also go to the editor of this journal
for kindly and properly handling this paper and its two sequels.

\bibstyle{plain}
\bigskip

\end{document}